\date{\today}
\newcommand{\bbN}{{\mathbb{N}}}
\newcommand{\bbR}{{\mathbb{R}}}
\newcommand{\bbC}{{\mathbb{C}}}
\newcommand{\cB}{{\mathcal B}}
\newcommand{\cH}{{\mathcal H}}
\newcommand{\cX}{{\mathcal X}}
\newcommand{\dott}{\,\cdot\,}
\newcommand{\no}{\notag}
\newcommand{\lb}{\label}
\newcommand{\f}{\frac}
\newcommand{\ol}{\overline}
\newcommand{\wti}{\widetilde}
\newcommand{\loc}{\text{\rm{loc}}}
\newcommand{\ran}{\text{\rm{ran}}}
\newcommand{\dom}{\text{\rm{dom}}}
\newcommand{\dist}{\text{\rm{dist}}}
\newcommand{\bi}{\bibitem}
\newcommand{\beq}{\begin{equation}}
\newcommand{\eeq}{\end{equation}}
\newcommand{\ba}{\begin{align}}
\newcommand{\ea}{\end{align}}
\newcommand{\tr}{\text{\rm{tr}}}
\newcommand{\abs}[1]{\lvert#1\rvert}
\renewcommand{\Im}{\text{\rm Im}}
\newcommand{\norm}[1]{\left\Vert#1\right\Vert}
\newcommand{\Om}{\Omega}
\newcommand{\dOm}{{\partial\Omega}}
\newcommand{\si}{\sigma}
\newcommand{\ga}{\gamma}
\newcommand{\eps}{\varepsilon}
\newcommand{\LOm}{L^2(\Om;d^nx)}
\newcommand{\LdOm}{L^2(\dOm;d^{n-1}\si)}
\allowdisplaybreaks \numberwithin{equation}{section}
\newtheorem{theorem}{Theorem}[section]
\newtheorem{lemma}[theorem]{Lemma}
\newtheorem{corollary}[theorem]{Corollary}
\newtheorem{hypothesis}[theorem]{Hypothesis}
\theoremstyle{definition}
\newtheorem{remark}[theorem]{Remark}
\begin{document}

\title[Dirichlet-to-Neumann Maps and Applications to Infinite Determinants]{On 
Dirichlet-to-Neumann Maps and Some Applications \\ to Modified Fredholm  Determinants}
\author[F.\ Gesztesy, M.\ Mitrea and M.\ Zinchenko]{Fritz Gesztesy, Marius Mitrea, and Maxim Zinchenko}
\address{Department of Mathematics,
University of Missouri, Columbia, MO 65211, USA}
\email{fritz@math.missouri.edu}
\urladdr{http://www.math.missouri.edu/personnel/faculty/gesztesyf.html}
\address{Department of Mathematics, University of
Missouri, Columbia, MO 65211, USA}
\email{marius@math.missouri.edu}
\urladdr{http://www.math.missouri.edu/personnel/faculty/mitream.html}
\address{Department of Mathematics,
California Institute of Technology, Pasadena, CA 91125, USA}
\email{maxim@caltech.edu}
\dedicatory{Dedicated with great pleasure to Boris Pavlov on the occasion of his 70th birthday}

\thanks{Based upon work partially supported by the US National Science
Foundation under Grant Nos.\ DMS-0405526, DMS-0400639, and 
FRG-0456306.}
\thanks{Appeared in {\it  Methods of Spectral Analysis in Mathematical Physics}, Conference on Operator Theory, Analysis and Mathematical Physics (OTAMP) 2006, Lund, Sweden, 
J.\ Janas, P.\ Kurasov, A.\ Laptev, S.\ Naboko, and G.\ Stolz (eds.), Operator Theory: Advances and Applications, Vol.\ {\bf 186}, Birkh\"auser, Basel, 2008, pp.\ 191--215.}
\date{\today}
\subjclass[2000]{Primary: 47B10, 47G10, Secondary: 34B27, 34L40.}
\keywords{Fredholm determinants, non-self-adjoint operators, multi-dimensional 
Schr\"odinger operators, Dirichlet-to-Neumann maps.}

\begin{abstract}
We consider Dirichlet-to-Neumann maps associated with (not necessarily self-adjoint) 
Schr\"odinger operators in $L^2(\Omega; d^n x)$, where $\Om\subset\bbR^n$, $n=2,3$, are open sets with a compact, nonempty boundary $\partial\Om$ satisfying certain regularity conditions. As an application we describe a reduction of a certain ratio of modified Fredholm perturbation determinants associated with operators in 
$L^2(\Om; d^n x)$ to modified Fredholm perturbation determinants associated with operators in $L^2(\partial\Om; d^{n-1}\sigma)$, $n=2,3$. This leads to a two- and three-dimensional extension of a variant of a celebrated formula due to Jost and Pais, which reduces the Fredholm perturbation determinant associated with a Schr\"odinger operator on the half-line $(0,\infty)$ to a simple Wronski determinant of appropriate distributional  solutions of the underlying Schr\"odinger equation. 
\end{abstract}

\maketitle

\section{Introduction}\label{s1}

To describe the original Fredholm determinant result due to Jost and Pais \cite{JP51}, we need a few preparations. Denoting by $H_{0,+}^D$ and $H_{0,+}^N$ the one-dimensional Dirichlet and Neumann Laplacians  in $L^2((0,\infty);dx)$, and assuming
\begin{equation}
V\in L^1((0,\infty);dx),   \lb{1.1}
\end{equation}
we introduce the perturbed Schr\"odinger operators
$H_{+}^D$ and $H_{+}^N$ in $L^2((0,\infty);dx)$ by
\begin{align}
&H_{+}^Df=-f''+Vf,  \no \\
&f\in \dom\big(H_{+}^D\big)=\{g\in L^2((0,\infty); dx) \,|\, g,g'
\in AC([0,R])
\text{ for all $R>0$}, \\
& \hspace*{4.8cm} g(0)=0, \, (-g''+Vg)\in L^2((0,\infty); dx)\}, \no \\
&H_{+}^Nf=-f''+Vf,  \no \\
&f\in \dom\big(H_{+}^N\big)=\{g\in L^2((0,\infty); dx) \,|\, g,g'
\in AC([0,R])
\text{ for all $R>0$}, \\
& \hspace*{4.7cm} g'(0)=0, \, (-g''+Vg)\in L^2((0,\infty); dx)\}. \no
\end{align}
Thus, $H_{+}^D$ and $H_{+}^N$ are self-adjoint if and only if $V$ is
real-valued, but since the latter restriction plays no special role in our results, we
will not assume real-valuedness of $V$ throughout this paper.

A fundamental system of solutions $\phi_+^D(z,\cdot)$,
$\theta_+^D(z,\cdot)$, and the Jost solution $f_+(z,\cdot)$ of
\begin{equation}
-\psi''(z,x)+V\psi(z,x)=z\psi(z,x), \quad z\in\bbC\backslash\{0\}, \;
x\geq 0,   \lb{1.4}
\end{equation}
are then introduced via the standard Volterra integral equations
\begin{align}
\phi_+^D(z,x)&=z^{-1/2}\sin(z^{1/2}x)+\int_0^x dx' \, z^{-1/2}\sin(z^{1/2}(x-x'))
V(x')\phi_+^D(z,x'), \\
\theta_+^D(z,x)&=\cos(z^{1/2}x)+\int_0^x dx' \, z^{-1/2}\sin(z^{1/2}(x-x'))
V(x')\theta_+^D (z,x'), \\
f_+(z,x)&=e^{iz^{1/2}x}-\int_x^\infty dx' \,
z^{-1/2}\sin(z^{1/2}(x-x')) V(x')f_+(z,x'),  \lb{1.7} \\
&\hspace*{3.85cm} z\in\bbC\backslash\{0\}, \; \Im(z^{1/2})\geq 0, \;
x\geq 0.  \no
\end{align}

In addition, we introduce
\begin{equation}
u=\exp(i\arg(V))\abs{V}^{1/2}, \quad v=\abs{V}^{1/2}, \, \text{ so
that } \, V=u\, v,
\end{equation}
and denote by $I_+$ the identity operator in $L^2((0,\infty); dx)$. Moreover,
we denote by
\begin{equation}
W(f,g)(x)=f(x)g'(x)-f'(x)g(x), \quad x \geq 0,
\end{equation}
the Wronskian of $f$ and $g$, where $f,g \in C^1([0,\infty))$. We
also use the standard convention to abbreviate (with a slight abuse of notation)
the operator of multiplication in $L^2((0,\infty);dx)$ by an element 
$f\in L^1_{\loc}((0,\infty);dx)$ (and similarly in the higher-dimensional
context with $(0,\infty)$ replaced by an appropriate open set 
$\Omega\subset\bbR^n$ later) by the same symbol $f$ (rather than $M_f$, etc.). 
For additional notational conventions we refer to the paragraph at the end of this introduction.

Then, the following results hold (with $\cB_1(\cdot)$ abbreviating the ideal of trace class operators):

\begin{theorem} \lb{t1.1}
Assume $V\in L^1((0,\infty);dx)$ and let $z\in\bbC\backslash [0,\infty)$
with $\Im(z^{1/2})>0$. Then,
\begin{equation}
\ol{u\big(H_{0,+}^D-z I_+\big)^{-1}v}, \,
\ol{u\big(H_{0,+}^N-z I_+\big)^{-1}v} \in \cB_1(L^2((0,\infty);dx))
\end{equation}
and
\begin{align}
\det\Big(I_+ +\ol{u\big(H_{0, +}^D-z I_+\big)^{-1}v}\,\Big) &=
1+z^{-1/2}\int_0^\infty dx\, \sin(z^{1/2}x)V(x)f_+(z,x)   \no \\
&= W(f_+(z,\cdot),\phi_+^D(z,\cdot)) = f_+(z,0),    \lb{1.11}  \\
\det\Big(I_+ +\ol{u\big(H_{0, +}^N-z I_+\big)^{-1}v}\,\Big)
&= 1+ i z^{-1/2} \int_0^\infty
dx\, \cos(z^{1/2}x)V(x)f_+(z,x) \no  \\
&= - \frac{W(f_+(z,\cdot),\theta_+^D (z,\cdot))}{i z^{1/2}} =
\frac{f_+'(z,0)}{i z^{1/2}}.    \lb{1.12}
\end{align}
\end{theorem}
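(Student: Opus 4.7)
\emph{Trace class.} The Dirichlet and Neumann free Green's functions on $(0,\infty)$, for $\Im(z^{1/2})>0$, are
\[
G_0^D(z,x,x')=\f{\sin(z^{1/2}x_<)e^{iz^{1/2}x_>}}{z^{1/2}},\quad G_0^N(z,x,x')=\f{i\cos(z^{1/2}x_<)e^{iz^{1/2}x_>}}{z^{1/2}},
\]
with $x_<=\min(x,x')$ and $x_>=\max(x,x')$. I would split each kernel $u(x)G_0^{D/N}(z,x,x')v(x')$ along the diagonal; on either triangular region it factors as a product $a(x)b(x')$, and combined with $|u|^2=|v|^2=|V|\in L^1$ and the exponential decay $|e^{iz^{1/2}x}|=e^{-\Im(z^{1/2})x}$, this exhibits the operator as a product of two Hilbert--Schmidt integral operators, hence in $\cB_1(L^2((0,\infty);dx))$.

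\emph{Algebraic identities.} Evaluating the Volterra equations defining $\phi_+^D$, $\theta_+^D$ and their $x$-derivatives at $x=0$ yields the initial data $\phi_+^D(z,0)=0$, $(\partial_x\phi_+^D)(z,0)=1$, $\theta_+^D(z,0)=1$, $(\partial_x\theta_+^D)(z,0)=0$. Since $f_+$, $\phi_+^D$, $\theta_+^D$ all solve \eqref{1.4} (which has no first-order term), the Wronskians $W(f_+,\phi_+^D)$ and $W(f_+,\theta_+^D)$ are $x$-independent; substituting the initial data at $x=0$ gives the rightmost equalities in \eqref{1.11}--\eqref{1.12}. The integral representations (middle equalities) follow from \eqref{1.7} and its $x$-derivative evaluated at $x=0$, using $\sin(-y)=-\sin(y)$ and $\cos(-y)=\cos(y)$.

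\emph{Main step: determinant $=$ Jost value.} For $\la\in[0,1]$ introduce
\[
\Delta_D(\la)=\det\bigl(I_++\la\,\ol{u(H_{0,+}^D-zI_+)^{-1}v}\bigr),\quad J_D(\la)=W(f_+^{(\la)},\phi_+^{D,(\la)})=f_+^{(\la)}(z,0),
\]
where $f_+^{(\la)}$, $\phi_+^{D,(\la)}$ correspond to the scaled potential $\la V$. Both are entire in $\la$ with $\Delta_D(0)=J_D(0)=1$, and the Birman--Schwinger correspondence matches their zeros with multiplicities. Differentiating via the standard trace-resolvent identity,
\[
\f{\Delta_D'(\la)}{\Delta_D(\la)}=\tr\bigl[V(H_{0,+}^D+\la V-zI_+)^{-1}\bigr]=\int_0^\infty V(x)\f{\phi_+^{D,(\la)}(z,x)f_+^{(\la)}(z,x)}{J_D(\la)}\,dx,
\]
the second equality following from the one-dimensional diagonal Green's function formula $G^D_\la(z,x,x)=\phi_+^{D,(\la)}(z,x)f_+^{(\la)}(z,x)/J_D(\la)$. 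A variation-of-parameters argument for the inhomogeneous equation $(-\partial_x^2+\la V-z)\partial_\la f_+^{(\la)}=-Vf_+^{(\la)}$, applied with the boundary behavior of $\partial_\la f_+^{(\la)}$ at $\infty$ inherited from $f_+^{(\la)}$ itself, then yields $J_D'(\la)=\int_0^\infty V\,f_+^{(\la)}\phi_+^{D,(\la)}\,dx$. Thus $\log\Delta_D$ and $\log J_D$ have matching derivatives and matching initial values, so integrating from $0$ to $1$ gives $\Delta_D(1)=J_D(1)=W(f_+,\phi_+^D)=f_+(z,0)$. The Neumann case is strictly parallel, with $\phi_+^{D,(\la)}\to\theta_+^{D,(\la)}$ and an overall factor $(iz^{1/2})^{-1}$ absorbing the free Neumann Wronskian. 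The principal obstacle is justifying differentiation under the trace and the variation-of-parameters identification of $\partial_\la f_+^{(\la)}$ when $V$ is not real and $H_{0,+}^D+\la V$ is non-self-adjoint, so one must work purely from $V\in L^1((0,\infty);dx)$ and the exponential decay ensured by $\Im(z^{1/2})>0$.
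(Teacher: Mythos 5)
The paper does not in fact prove Theorem \ref{t1.1}; it records it as the classical Jost--Pais result and points to \cite{JP51} and to the semi-separable-kernel computations of \cite{GM03} (with the Neumann case \eqref{1.12} noted as a parallel calculation). So a comparison must be with the standard literature proof rather than with an argument inside this paper.

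Your coupling-constant deformation is a genuinely different, and valid, route. Instead of computing the Fredholm determinant directly from the semi-separable kernel $u(x)G^{D/N}_0(z,x,x')v(x')$ (the \cite{GM03} approach), you parametrize by $\la$, show that $\log\det\big(I_++\la\,\ol{u(H_{0,+}^{D/N}-zI_+)^{-1}v}\big)$ and $\log J_{D/N}(\la)$ have the same $\la$-derivative via the diagonal Green's function $G^{D/N}_\la(z,x,x)$, and integrate from $\la=0$ to $\la=1$. I checked the variation-of-parameters step: with $W(f_+^{(\la)},\phi_+^{D,(\la)})=f_+^{(\la)}(z,0)$ and $\phi_+^{D,(\la)}(z,0)=0$, one indeed gets $\partial_\la f_+^{(\la)}(z,0)=\int_0^\infty V\,\phi_+^{D,(\la)}f_+^{(\la)}\,dx$, and similarly for the Neumann normalization $(f_+^{(\la)})'(z,0)/(iz^{1/2})$. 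What this buys you is a transparent conceptual picture (two analytic functions of $\la$ with identical logarithmic derivatives and the same value at $\la=0$) at the cost of the technical bookkeeping you flag at the end; the kernel-expansion route of \cite{GM03} avoids differentiating under the trace but requires explicit iterated-kernel computations. The remark that the Birman--Schwinger correspondence matches the zeros is not needed for your argument: matching log-derivatives plus $\Delta_{D}(0)=J_D(0)=1$ plus entirety in $\la$ already forces $\Delta_D\equiv J_D$.

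Two small cautions. First, the trace-class paragraph: splitting the kernel along the diagonal produces pieces of the form $a(x)b(x')\chi_{x<x'}$, and such a Volterra-type piece is \emph{not} literally a ``product $a(x)b(x')$'' as an operator; the indicator prevents the rank-one factorization. The clean and standard factorization is at the operator level, $\ol{u(H_{0,+}^{D/N}-zI_+)^{-1}v}=\big[u(H_{0,+}^{D/N}-zI_+)^{-1/2}\big]\big[\ol{(H_{0,+}^{D/N}-zI_+)^{-1/2}v}\big]$, each factor Hilbert--Schmidt because $u,v\in L^2((0,\infty);dx)$ (equivalently $V\in L^1$) and the free half-line resolvent kernels have the $e^{-\Im(z^{1/2})x}$ decay you invoke. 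Second, you should write the logarithmic derivative as $\tr\big[\ol{u(H_{0,+}^D+\la V-zI_+)^{-1}v}\big]$ rather than $\tr\big[V(H_{0,+}^D+\la V-zI_+)^{-1}\big]$: the latter operator is not trace class for $V\in L^1$ without first performing the cyclic rearrangement, whereas the symmetrized form is, by the very factorization above applied to the perturbed resolvent via the second resolvent identity. With these two repairs, the argument is sound.
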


Equation \eqref{1.11} is the modern formulation of the celebrated result due to Jost and
Pais \cite{JP51}. Performing calculations similar to Section 4 in \cite{GM03} for the
pair of operators $H_{0,+}^N$ and $H_+^N$, one obtains the analogous result
\eqref{1.12}. 

We emphasize that \eqref{1.11} and \eqref{1.12} exhibit a spectacular
reduction of a Fredholm determinant, that is, an infinite determinant (actually, a symmetrized perturbation determinant), associated with the trace class 
Birmann--Schwinger kernel of a one-dimensional Schr\"odinger operator on the half-line $(0,\infty)$, to a simple Wronski determinant of $\bbC$-valued distributional solutions of
\eqref{1.4}. This fact goes back to Jost and Pais \cite{JP51} (see
also \cite{GM03}, \cite{Ne72}, \cite{Ne80}, \cite[Sect.\ 12.1.2]{Ne02},
\cite{Si00}, \cite[Proposition 5.7]{Si05}, and the extensive literature
cited in these references). The principal aim of this paper is to explore
the extent to which this fact may generalize to higher dimensions. While a 
direct generalization of \eqref{1.11},
\eqref{1.12} appears to be difficult, we will next derive a formula for
the ratio of such determinants which indeed permits a natural extension to
higher dimensions.

For this purpose we introduce the boundary trace operators
$\ga_D$ (Dirichlet trace) and $\ga_N$ (Neumann trace) which, in the
current one-dimensional half-line situation, are just the functionals,
\begin{equation}
\ga_D \colon \begin{cases} C([0,\infty)) \to \bbC, \\
\hspace*{1.3cm} g \mapsto g(0), \end{cases}  \quad
\ga_N \colon \begin{cases}C^1([0,\infty)) \to \bbC,  \\
\hspace*{1.43cm} h \mapsto   - h'(0). \end{cases}
\end{equation}
In addition, we denote by $m_{0,+}^D$, $m_+^D$, $m_{0,+}^N$, and $m_+^N$
the Weyl--Titchmarsh $m$-functions corresponding to $H_{0,+}^D$,
$H_{+}^D$, $H_{0,+}^N$, and $H_{+}^N$, respectively, that is,
\begin{align}
m_{0,+}^D(z) &= i z^{1/2},  \qquad m_{0,+}^N (z)= -\frac{1}{m_{0,+}^D(z)}
= i z^{-1/2},  \lb{1.14} \\
m_{+}^D(z) &= \frac{f_+'(z,0)}{f_+(z,0)}, \quad m_{+}^N (z)=
-\frac{1}{m_{+}^D(z)} = -\frac{f_+(z,0)}{f_+'(z,0)}.  \lb{1.15}
\end{align}
In the case where $V$ is real-valued, we briefly recall the spectral theoretic significance of $m_{+}^D$: It is a Herglotz function (i.e., it maps the open complex upper half-plane $\bbC_+$ analytically into itself) and the measure $d\rho^D_+$ in its Herglotz representation is then the spectral measure of the operator $H_{+}^D$ and hence encodes all spectral information of 
$H_{+}^D$. Similarly, $m_{+}^D$ also encodes all spectral information of $H_{+}^N$ since $-1/m_{+}^D = m_{+}^N$ is also a Herglotz function and the measure
$d\rho^N_+$ in its Herglotz representation represents the spectral measure of the operator $H_{+}^N$. In particular, $d\rho^D_+$ (respectively, $d\rho^N_+$) uniquely determine $V$ a.e.\ on $(0,\infty)$ by the inverse spectral approach of Gelfand and Levitan \cite{GL55} or Simon \cite{Si99}, \cite{GS00} (see also Remling \cite{Re03} and Section 6 in the survey \cite{Ge07}).

Then we obtain the following result for the ratio of the perturbation determinants in
\eqref{1.11} and \eqref{1.12}:

\begin{theorem} \lb{t1.2}
Assume $V\in L^1((0,\infty);dx)$ and let $z\in\bbC\backslash\si(H_+^D)$
with $\Im(z^{1/2})>0$. Then,
\begin{align}
& \frac{\det\Big(I_+ +\ol{u\big(H_{0, +}^N-z I_+\big)^{-1} v}\,\Big)}
{\det\Big(I_+ +\ol{u\big(H_{0, +}^D-z I_+\big)^{-1}v}\,\Big)}
\no \\
&\quad = 1 - \big(\ol{\ga_N(H_+^D-z I_+)^{-1}V
\big[\ga_D(H_{0,+}^N-\ol{z}I_+)^{-1}\big]^*}\,\big)  \lb{1.16}  \\
& \quad = \f{W(f_+(z),\phi_+^N(z))}{i z^{1/2}W(f_+(z),\phi_+^D(z))} =
\f{f'_+(z,0)}{i z^{1/2}f_+(z,0)} = \f{m_+^D(z)}{m_{0,+}^D(z)} =
\f{m_{0,+}^N(z)}{m_+^N(z)}.   \lb{1.17}
\end{align}
\end{theorem}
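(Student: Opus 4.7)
The plan is two-pronged. First, the chain of equalities \eqref{1.17} is essentially an immediate reading of Theorem~\ref{t1.1}. By \eqref{1.11} the denominator in \eqref{1.16} equals $f_+(z,0)=W(f_+(z,\dott),\phi_+^D(z,\dott))$, and by \eqref{1.12} the numerator equals $f_+'(z,0)/(iz^{1/2})$, so their ratio equals $f_+'(z,0)/[iz^{1/2}f_+(z,0)]$. The remaining identifications in \eqref{1.17} then follow from \eqref{1.14}--\eqref{1.15} together with $m^N=-1/m^D$, once $\phi_+^N$ is identified with the Neumann-normalized free solution (essentially $-\theta_+^D$) so that $W(f_+,\phi_+^N)(z)=f_+'(z,0)$.

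The substantive content is the operator-theoretic formula \eqref{1.16}. Abbreviating $K_D(z)=\ol{u(H_{0,+}^D-zI_+)^{-1}v}$ and $K_N(z)=\ol{u(H_{0,+}^N-zI_+)^{-1}v}$ (both trace class by Theorem~\ref{t1.1}), the plan is to start from the Fredholm determinant ratio identity
\begin{equation*}
\frac{\det(I_++K_N(z))}{\det(I_++K_D(z))}=\det\bigl(I_++(K_N(z)-K_D(z))(I_++K_D(z))^{-1}\bigr),
\end{equation*}
to substitute the standard second-resolvent-identity formula $(I_++K_D(z))^{-1}=I_+-\ol{u(H_+^D-zI_+)^{-1}v}$ (valid for $z\notin\si(H_+^D)$), and to feed in the rank-one Krein-type decomposition
\begin{equation*}
(H_{0,+}^N-zI_+)^{-1}-(H_{0,+}^D-zI_+)^{-1}=-\bigl[\ga_D(H_{0,+}^N-\ol{z}I_+)^{-1}\bigr]^*\ga_N(H_{0,+}^D-zI_+)^{-1},
\end{equation*}
which I would verify by direct computation with the explicit half-line Green's functions. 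Absorbing $vu=V$ where needed and telescoping $\ga_N(H_{0,+}^D-zI_+)^{-1}(I_+-V(H_+^D-zI_+)^{-1})=\ga_N(H_+^D-zI_+)^{-1}$ via one more application of the second resolvent identity would then reduce the right-hand side to
\begin{equation*}
\det\bigl(I_+-\ol{u\bigl[\ga_D(H_{0,+}^N-\ol{z}I_+)^{-1}\bigr]^*\ga_N(H_+^D-zI_+)^{-1}v}\bigr).
\end{equation*}
A final application of the cyclic identity $\det(I+AB)=\det(I+BA)$ would collapse this rank-one operator on $L^2((0,\infty);dx)$ into the scalar $1-\ol{\ga_N(H_+^D-zI_+)^{-1}V\bigl[\ga_D(H_{0,+}^N-\ol{z}I_+)^{-1}\bigr]^*}$ acting on $\bbC$ (using $vu=V$ once more), which is precisely \eqref{1.16}.

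The main obstacle will be the rigorous interpretation of the various closures. Since neither $\ga_D$ nor $\ga_N$ is bounded on $L^2((0,\infty);dx)$, every composition through these trace functionals must be read as the closure of a densely defined product; this is precisely the role of the overlines in \eqref{1.16}. Justifying that the Krein factorization survives multiplication by $u,v$ on either side, and that the cyclic identity $\det(I+AB)=\det(I+BA)$ remains applicable after passing to closures, is the principal technical point. The explicit half-line formula $[\ga_D(H_{0,+}^N-\ol{z}I_+)^{-1}]^*c=c\,m_{0,+}^N(z)e^{iz^{1/2}\dott}$ coupled with the Volterra representation \eqref{1.7} for $f_+$ then permits a kernel-level consistency check of the operator computation against \eqref{1.17}.
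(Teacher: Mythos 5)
Your plan is correct and mirrors the paper's own strategy for the multi-dimensional analogue, Theorem \ref{t4.1}: the Krein-type factorization of $(H_{0,+}^N-zI_+)^{-1}-(H_{0,+}^D-zI_+)^{-1}$, the inversion $(I_++K_D(z))^{-1}=I_+-\ol{u(H_+^D-zI_+)^{-1}v}$, and the cyclic identity $\det(I+AB)=\det(I+BA)$ are precisely the ingredients employed there (cf.\ \eqref{4.13}, \eqref{4.20}, \eqref{4.23}), the only simplification being that the trace-class setting on $(0,\infty)$ eliminates the exponential correction factor that appears with $\det{}_2(\cdot)$. The paper does not spell out a separate proof of Theorem \ref{t1.2} --- it is offered as the one-dimensional prototype --- so your two-pronged scheme, with \eqref{1.17} read off from Theorem \ref{t1.1} and \eqref{1.14}--\eqref{1.15} and the rank-one scalar collapse giving \eqref{1.16}, is exactly the intended argument.
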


The proper multi-dimensional generalization to Schr\"odinger operators in 
$L^2(\Om;d^n x)$ corresponding to an open set $\Om \subset \bbR^n$ with compact, nonempty boundary $\partial\Om$ then involves the operator-valued generalization of the Weyl--Titchmarsh function $m_+^D(z)$, the Dirichlet-to-Neumann map denoted by 
$M_{\Om}^D(z)$. In particular, we will derive the following multi-dimensional extension of \eqref{1.16} and \eqref{1.17} in Section \ref{s4}:

\begin{theorem} \lb{t1.3}
Assume Hypothesis \ref{h2.6} and let
$z\in\bbC\big\backslash\big(\si\big(H_{\Om}^D\big)\cup \si\big(H_{0,\Om}^D\big) \cup
\si\big(H_{0,\Om}^N\big)\big)$. Then,
\begin{align}
& \frac{\det{}_2\Big(I_{\Om}+\ol{u\big(H_{0,\Om}^N-zI_{\Om}\big)^{-1}v}\,\Big)}
{\det{}_2\Big(I_{\Om}+\ol{u\big(H_{0,\Om}^D-zI_{\Om}\big)^{-1}v}\,\Big)} \no \\
& \quad = \det{}_2\Big(I_{\dOm} - \ol{\ga_N\big(H_{\Om}^D-zI_{\Om}\big)^{-1} V
\big[\ga_D(H_{0,\Om}^N-\ol{z}I_{\Om})^{-1}\big]^*}\,\Big)
e^{\tr(T_2(z))}   \lb{1.18}  \\
& \quad =
\det{}_2\big(M_{\Om}^{D}(z)M_{0,\Om}^{D}(z)^{-1}\big)
e^{\tr(T_2(z))}.   \lb{1.19}
\end{align}
\end{theorem}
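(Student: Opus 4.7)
The plan is to derive \eqref{1.18} and \eqref{1.19} in two stages: first, reduce the ratio on the left to a $\det{}_2$ on $L^2(\dOm)$ modulo a trace correction $e^{\tr(T_2(z))}$; then identify that boundary determinant with $\det{}_2\big(M_\Om^D(z)M_{0,\Om}^D(z)^{-1}\big)$.

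First, set $K_D(z):=\ol{u(H_{0,\Om}^D-zI_\Om)^{-1}v}$ and analogously $K_N(z)$. I would use the multiplicative defect of the modified Fredholm determinant, $\det{}_2(I+A)\det{}_2(I+B)=\det{}_2((I+A)(I+B))\,e^{-\tr(AB)}$, valid for $A,B\in\cB_2$ with $AB\in\cB_1$, applied to a factorization of $(I+K_N)(I+K_D)^{-1}$. Combined with the second resolvent identity and the Birman--Schwinger inversion $(I+K_D)^{-1}=I-\ol{u(H_\Om^D-zI_\Om)^{-1}v}$, this rewrites the ratio as $\det{}_2(I_\Om+F(z))\,e^{\tr(T_2(z))}$, where $F(z)$ is the closure of $u$ times the resolvent difference $(H_{0,\Om}^N-z)^{-1}-(H_{0,\Om}^D-z)^{-1}$ times $v(I+K_D)^{-1}$, and $T_2(z)$ is the precise trace-class correction forced by iterating this procedure at the $\det{}_2$ (rather than $\det{}_1$) level.

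Second, I would invoke the Krein-type resolvent formula proved earlier in the paper, which factors the difference $(H_{0,\Om}^N-zI_\Om)^{-1}-(H_{0,\Om}^D-zI_\Om)^{-1}$ as a product of a Poisson-type operator $L^2(\dOm)\to L^2(\Om)$, the inverse Dirichlet-to-Neumann map $M_{0,\Om}^D(z)^{-1}$, and a Neumann trace of the Dirichlet resolvent. Consequently $F(z)$ factors as $C(z)\,B(z)$ with $C(z)\colon L^2(\dOm)\to L^2(\Om)$ and $B(z)\colon L^2(\Om)\to L^2(\dOm)$. The cyclicity identity $\det{}_2(I_\Om+C(z)B(z))=\det{}_2(I_{\dOm}+B(z)C(z))$ (valid since $C(z)B(z)$ and $B(z)C(z)$ share Hilbert--Schmidt status and have equal trace once composed appropriately) transfers the determinant to the boundary. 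A direct computation of $B(z)C(z)$ then produces the expression displayed in \eqref{1.18}.

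Third, to pass from \eqref{1.18} to \eqref{1.19} I would exploit the very definition of the perturbed Dirichlet-to-Neumann map. Given boundary data $g\in L^2(\dOm)$, the solution of $(-\De+V-z)u=0$ in $\Om$ with $\ga_D u=g$ is $u=u_0-(H_\Om^D-zI_\Om)^{-1}Vu_0$, where $u_0$ solves $(-\De-z)u_0=0$ with the same Dirichlet trace. Rewriting $u_0$ via the adjoint $[\ga_D(H_{0,\Om}^N-\ol{z}I_\Om)^{-1}]^{\ast}$ of the boundary trace of the Neumann resolvent, and applying $-\ga_N$, yields
\begin{equation*}
M_\Om^D(z)=\Big(I_{\dOm}-\ol{\ga_N(H_\Om^D-zI_\Om)^{-1}V\big[\ga_D(H_{0,\Om}^N-\ol{z}I_\Om)^{-1}\big]^{\ast}}\,\Big)M_{0,\Om}^D(z),
\end{equation*}
so right-multiplication by $M_{0,\Om}^D(z)^{-1}$ identifies the two $\det{}_2$-arguments.

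The main obstacle will be controlling the trace correction $T_2(z)$ explicitly and uniformly in $z$, since in dimensions $n=2,3$ the factored resolvents $u(H_{0,\Om}^{D/N}-z)^{-1}v$ are only Hilbert--Schmidt, so every manipulation must stay within the $\cB_2$/$\cB_1$ calculus appropriate to $\det{}_2$. A secondary technical burden is verifying that each bar-closure in the boundary operators is well-defined and that $\ga_N(H_\Om^D-z)^{-1}V[\ga_D(H_{0,\Om}^N-\ol{z})^{-1}]^{\ast}$ sits in the correct Schatten ideal on $L^2(\dOm)$; both rest on mapping properties of trace operators and layer potentials encoded in Hypothesis \ref{h2.6}.
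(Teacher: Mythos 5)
Your proposal is correct and mirrors the paper's own two-stage argument: the paper first establishes the $\det{}_2$ reduction \eqref{1.18} (Theorem \ref{t4.1}) via $\det{}_2$-algebra, the factorization $(H_{0,\Om}^D-zI_\Om)^{-1}-(H_{0,\Om}^N-zI_\Om)^{-1}=\big[\ga_D(H_{0,\Om}^N-\ol{z}I_\Om)^{-1}\big]^*\ga_N(H_{0,\Om}^D-zI_\Om)^{-1}$ (a lemma of Nakamura, not a Krein formula), and the cyclicity idea \eqref{4.0}; it then obtains \eqref{1.19} from the operator identity $M_\Om^D(z)M_{0,\Om}^D(z)^{-1}=I_\dOm-\ol{\ga_N(H_\Om^D-zI_\Om)^{-1}V\big[\ga_D(H_{0,\Om}^N-\ol{z}I_\Om)^{-1}\big]^*}$ proved in Lemma \ref{l3.5}. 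Two small remarks on your version: the Krein-type factorization with an explicit $M_{0,\Om}^D(z)^{-1}$ is equivalent to Nakamura's since $\big[\ga_N(H_{0,\Om}^D-\ol{z}I_\Om)^{-1}\big]^*M_{0,\Om}^D(z)^{-1}=\big[\ga_D(H_{0,\Om}^N-\ol{z}I_\Om)^{-1}\big]^*$; and in your Stage 3, $\big[\ga_D(H_{0,\Om}^N-\ol{z}I_\Om)^{-1}\big]^*$ is the Neumann solution operator, so rewriting $u_0$ via it tacitly requires feeding in $\wti\ga_N u_0=-M_{0,\Om}^D(z)g$, which is exactly where the right-hand factor $M_{0,\Om}^D(z)$ enters (the paper packages this algebraically through \eqref{3.35}, \eqref{3.40}, \eqref{3.44}). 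The technical burdens you flag at the end — the rebalanced factorization $V=\wti u\wti v$ with exponents $p_1,p_2$ from \eqref{4.6} to keep everything in the correct Schatten classes, and the approximation of $V\in L^p$ by $V\in L^p\cap L^\infty$ — are precisely what occupies the second half of the paper's proof of Theorem \ref{t4.1}.
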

Here, ${\det}_2(\cdot)$ denotes the modified Fredholm determinant in
connection with Hilbert--Schmidt perturbations of the identity, $T_2(z)$ is given by
\begin{equation}
T_2(z)=\ol{\ga_N\big(H_{0,\Om}^D-zI_{\Om}\big)^{-1} V
\big(H_{\Om}^D-zI_{\Om}\big)^{-1} V
\big[\ga_D \big(H_{0,\Om}^N-\ol{z}I_{\Om}\big)^{-1}\big]^*},
\end{equation}
and $I_{\Om}$ and $I_{\partial\Om}$ represent the identity operators in
$L^2(\Om; d^n x)$ and  $L^2(\partial\Om; d^{n-1} \sigma)$, respectively (with
$d^{n-1}\sigma$ the surface measure on $\partial\Om$). 

For pertinent comments on the principal reduction of (a ratio of) modified Fredholm determinants associated with operators in $L^2(\Om; d^n x)$ on the left-hand side of \eqref{1.18} to a modified Fredholm determinant associated with operators in
$L^2(\partial\Om; d^{n-1} \sigma)$ on the right-hand side of \eqref{1.18} and especially, in \eqref{1.19}, we refer to Section \ref{s4}.

Finally, we briefly list some of the notational conventions used throughout this
paper. Let $T$ be a linear operator
mapping (a subspace of) a Banach space into another, with
$\dom(T)$ and $\ran(T)$ denoting the domain and range of $T$.
The closure of a closable operator $S$ is denoted by $\ol S$.
The kernel (null space) of $T$ is denoted by $\ker(T)$. The
spectrum and resolvent set of a closed linear operator in 
a separable complex Hilbert space $\cH$ (with scalar product  
denoted by $(\cdot,\cdot)_{\cH}$, assumed to be linear in the second factor) 
will be denoted by $\sigma(\cdot)$ and $\rho(\cdot)$. The Banach
spaces of bounded and compact linear operators in $\cH$ are
denoted by $\cB(\cH)$ and $\cB_\infty(\cH)$, respectively.
Similarly, the Schatten--von Neumann (trace) ideals will
subsequently be denoted by $\cB_p(\cH)$, $p\in\bbN$. Analogous
notation $\cB(\cH_1,\cH_2)$, $\cB_\infty (\cH_1,\cH_2)$, etc.,
will be used for bounded, compact, etc., operators between two
Hilbert spaces $\cH_1$ and $\cH_2$. In addition, $\tr(T)$
denotes the trace of a trace class operator $T\in\cB_1(\cH)$ and
$\det_{p}(I_{\cH}+S)$ represents the (modified) Fredholm
determinant associated with an operator $S\in\cB_p(\cH)$,
$p\in\bbN$ (for $p=1$ we omit the subscript $1$). Moreover,
$\cX_1 \hookrightarrow \cX_2$ denotes the continuous embedding
of the Banach space $\cX_1$ into the Banach space $\cX_2$.

For general references on the theory of modified Fredholm determinants we 
refer, for instance, to \cite[Sect.\ XI.9]{DS88}, \cite[Chs.\  IX, XI]{GGK00}, 
\cite[Sect.\ IV.2]{GK69}, \cite{Si77}, and \cite[Ch.\ 9]{Si05}.

\section{Schr\"odinger Operators with Dirichlet and Neumann boundary conditions}
\label{s2}

In this section we primarily focus on various properties of Dirichlet,
$H^D_{0,\Om}$, and Neumann, $H^N_{0,\Om}$, Laplacians in
$L^2(\Om;d^n x)$ associated with open sets $\Omega\subset \bbR^n$, $n=2,3$, introduced in Hypothesis \ref{h2.1} below. In particular, we study mapping properties of $\big(H^{D,N}_{0,\Om}-zI_{\Om}\big)^{-q}$, $q\in [0,1]$, 
($I_{\Om}$ the identity operator in $L^2(\Om; d^n x)$) and trace ideal properties of the maps $f \big(H^{D,N}_{0,\Om}-zI_{\Om}\big)^{-q}$, $f\in L^p(\Om; d^n x)$, for appropriate $p\geq 2$, and
$\gamma_N \big(H^{D}_{0,\Om}-zI_{\Om}\big)^{-r}$, and
$\gamma_D \big(H^{N}_{0,\Om}-zI_{\Om}\big)^{-s}$, for appropriate
$r>3/4$, $s>1/4$, with $\gamma_N$ and $\gamma_D$ being the Neumann and Dirichlet boundary trace operators defined in \eqref{2.2} and \eqref{2.3}.

At the end of this section we then introduce the Dirichlet and Neumann
Schr\"odinger operators $H^D_{\Om}$ and $H^N_{\Om}$ in
$L^2(\Om;d^n x)$, that is, perturbations of the Dirichlet and Neumann Laplacians $H^D_{0,\Om}$ and $H^N_{0,\Om}$ by a potential $V$ satisfying
Hypothesis \ref{h2.6}.

We start with introducing our assumptions on the set $\Omega$:

\begin{hypothesis} \lb{h2.1}
Let $n=2,3$ and assume that $\Omega\subset{\bbR}^n$ is an open
set with a compact, nonempty boundary $\partial\Omega$. In addition,
we assume that one of the following three conditions holds: \\
$(i)$ \, $\Omega$ is of class $C^{1,r}$ for some $1/2 < r <1$; \\
$(ii)$  \hspace*{.0001pt} $\Omega$ is convex; \\
$(iii)$ $\Omega$ is a Lipschitz domain satisfying a {\it uniform exterior ball condition}. 
\end{hypothesis}

The class of domains described in Hypothesis \ref{h2.1} is a subclass of all Lipschitz domains with compact nonempty boundary. We also note that while $\dOm$ is assumed to be compact, $\Om$ may be unbounded 
(e.g., an exterior domain) in connection with conditions $(i)$ or $(iii)$. For more details in this context, in particular, for the precise definition of the uniform exterior ball condition, we refer to \cite[App.\ A]{GLMZ05} and \cite[App.\ A]{GMZ06} (and the references cited therein, such as \cite[Ch.\ 1]{Gr85}, \cite{GI75}, \cite{JK81}, \cite{Ka64}, \cite[Ch.\ 3]{Mc00}, 
\cite{Mi96}, \cite{Mi01}, \cite[p.\ 189]{St70}, \cite{Ta65}, \cite{Tr02}, and 
\cite[Sect.\ I.4.2]{Wl87}).

First, we introduce the boundary trace operator $\ga_D^0$
(Dirichlet trace) by
\begin{equation}
\ga_D^0\colon C(\ol{\Om})\to C(\dOm), \quad \ga_D^0 u = u|_\dOm .
\end{equation}
Then there exists a bounded, linear operator $\gamma_D$ (cf.
\cite[Theorem 3.38]{Mc00}),
\begin{align}
\begin{split}
& \ga_D\colon H^{s}(\Om)\to H^{s-(1/2)}(\dOm) \hookrightarrow \LdOm,
\quad 1/2<s<3/2, \lb{2.2}  \\
& \ga_D\colon H^{3/2}(\Om)\to H^{1-\varepsilon}(\dOm) \hookrightarrow \LdOm,
\quad \varepsilon \in (0,1), 
\end{split}
\end{align} 
whose action is compatible with that of $\ga_D^0$. That is, the two
Dirichlet trace  operators coincide on the intersection of their
domains. We recall that $d^{n-1}\sigma$ denotes the surface measure on
$\dOm$ and we refer to \cite[App.\ A]{GLMZ05} for our notation in
connection with Sobolev spaces (see also \cite[Ch.\ 3]{Mc00}, \cite{Tr02}, and 
\cite[Sect.\ I.4.2]{Wl87}).

Next, we introduce the operator $\ga_N$ (Neumann trace) by
\begin{align}
\ga_N = \nu\cdot\ga_D\nabla \colon H^{s+1}(\Om)\to \LdOm, \quad
1/2<s<3/2, \lb{2.3}
\end{align}
where $\nu$ denotes the outward pointing normal unit vector to
$\partial\Om$. It follows from \eqref{2.2} that $\ga_N$ is also a
bounded operator.

Given Hypothesis \ref{h2.1}, we introduce the self-adjoint and nonnegative Dirichlet and
Neumann Laplacians $H_{0,\Om}^D$ and $H_{0,\Om}^N$ associated
with the domain $\Om$ as follows,
\begin{align}
H^D_{0,\Om} = -\Delta, \quad \dom\big(H^D_{0,\Om}\big) = \{u\in H^{2}(\Om)
\,|\, \ga_D u = 0\}, \lb{2.4}
\\
H^N_{0,\Om} = -\Delta, \quad \dom\big(H^N_{0,\Om}\big) = \{u\in H^{2}(\Om)
\,|\, \ga_N u = 0\}. \lb{2.5}
\end{align}
A detailed discussion of $H^D_{0,\Om}$ and $H^N_{0,\Om}$ is provided in 
\cite[App.\ A]{GLMZ05} (cf.\ also \cite[Sects. X.III.14, X.III.15]{RS78}).

\begin{lemma} \lb{l2.2}
Assume Hypothesis \ref{h2.1}. Then the operators $H_{0,\Om}^D$
and $H_{0,\Om}^N$ introduced in \eqref{2.4} and \eqref{2.5} are
nonnegative and self-adjoint in $\LOm$ and the following boundedness 
properties hold for all $q\in [0,1]$ and $z\in\bbC\backslash[0,\infty)$, 
\begin{align}
\big(H_{0,\Om}^D-zI_{\Om}\big)^{-q},\, \big(H_{0,\Om}^N-zI_{\Om}\big)^{-q}
\in\cB\big(\LOm,H^{2q}(\Om)\big). \lb{2.6}
\end{align}
\end{lemma}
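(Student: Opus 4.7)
\medskip
\noindent\textbf{Proof plan for Lemma \ref{l2.2}.} The plan is to split the claim into (a) self-adjointness and nonnegativity, (b) the endpoint boundedness statement at $q=1$, and (c) an interpolation step covering $q\in(0,1)$.

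For (a), the natural route is the quadratic-form approach. Introduce the Dirichlet form
\[
Q_D(u,v)=\int_{\Om} \nabla u\cdot \overline{\nabla v}\,d^n x,\qquad \dom(Q_D)=H^1_0(\Om),
\]
and the analogous Neumann form with $\dom(Q_N)=H^1(\Om)$. Both are densely defined, closed, symmetric, and nonnegative on $\LOm$, so each generates a unique nonnegative self-adjoint operator. The remaining task is the identification of these form-generated operators with the operators in \eqref{2.4}, \eqref{2.5}, which reduces to the claim that every solution in the form domain of the distributional equation $-\Delta u=f$ (with the appropriate boundary condition) in fact belongs to $H^2(\Om)$. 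This is the content of the $H^2$-regularity theorems already needed for (b); citing \cite[App.\ A]{GLMZ05} or the references quoted there (Grisvard, Jerison--Kenig, Adolfsson--Mitrea, Mc\-Lean) handles all three subcases $(i)$, $(ii)$, $(iii)$ of Hypothesis \ref{h2.1}.

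For (b), i.e.\ $q=1$: given $f\in\LOm$ and $z\in\bbC\setminus[0,\infty)$, let $u=(H^{D}_{0,\Om}-zI_{\Om})^{-1}f$. Then $u\in\dom(H^{D}_{0,\Om})\subset H^2(\Om)$ by definition and the elliptic estimate $\|u\|_{H^2(\Om)}\le C_z\|f\|_{\LOm}$ holds, with $C_z$ depending on $\dist(z,[0,\infty))$ and on the geometry of $\Om$. The Dirichlet case is standard for $C^{1,r}$ domains, for convex domains (Grisvard), and for Lipschitz domains with a uniform exterior ball condition (Adolfsson--Jerison--Kenig--Mitrea); the Neumann case is handled analogously, using that $\ga_N u=0$ in $\LdOm$ is meaningful once $u\in H^2(\Om)$ via \eqref{2.3}. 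Combined with the trivial case $q=0$, one obtains
\[
(H^{D,N}_{0,\Om}-zI_{\Om})^{-1}\in\cB(\LOm,H^2(\Om)),\qquad
(H^{D,N}_{0,\Om}-zI_{\Om})^{0}\in\cB(\LOm,\LOm).
\]

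For (c), i.e.\ $q\in(0,1)$: the fractional resolvent power $(H^{D,N}_{0,\Om}-zI_{\Om})^{-q}$ is well defined by the self-adjoint functional calculus for nonnegative operators. Using complex interpolation of the analytic family $z\mapsto (H^{D,N}_{0,\Om}-zI_{\Om})^{-\zeta}$ (or the Heinz--Kato inequality), the two endpoint bounds just established yield
\[
(H^{D,N}_{0,\Om}-zI_{\Om})^{-q}\in\cB\bigl(\LOm,[\LOm,H^2(\Om)]_{q}\bigr),
\]
and the identification $[\LOm,H^2(\Om)]_{q}=H^{2q}(\Om)$ on Lipschitz domains (which is available for the full range $q\in[0,1]$ via standard Sobolev interpolation on Lipschitz domains; see \cite[Ch.\ 3]{Mc00}) delivers exactly \eqref{2.6}.

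The main obstacle is not (a) or (c), which are structural, but the $q=1$ endpoint (b): the geometric hypotheses on $\Om$ are tailored precisely to make the global $H^2$ regularity valid, and invoking the correct reference for each of the three allowed classes of domains is the delicate point. In particular, for Lipschitz domains with a uniform exterior ball condition, one cannot rely on classical smooth-domain theory and must appeal to the more recent literature cited in \cite[App.\ A]{GLMZ05}, \cite[App.\ A]{GMZ06}.
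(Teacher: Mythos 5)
Your proposal follows essentially the same route as the paper: the paper simply cites \cite[Lemma A.2]{GLMZ05} and remarks that the key ingredients are the $H^2$-regularity inclusions $\dom(H^{D}_{0,\Om}),\dom(H^{N}_{0,\Om})\subset H^2(\Om)$ together with interpolation, which is exactly your (b) plus (c). The only cosmetic difference is that you invoke complex (Stein) interpolation or Heinz--Kato, whereas the paper refers to real interpolation methods; since one is interpolating between $L^2$-based Sobolev spaces on Lipschitz domains, the real and complex scales coincide and the two routes give the same conclusion $[\LOm,H^{2}(\Om)]_{q}=(\LOm,H^{2}(\Om))_{q,2}=H^{2q}(\Om)$.
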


The fractional powers in \eqref{2.6} (and in subsequent analogous
cases) are defined via the functional calculus implied by the
spectral theorem for self-adjoint operators.

As explained in \cite[Lemma A.2]{GLMZ05} (based on results in \cite{JK95}, 
\cite[Thm.\ 4.4, App.\ B]{Mc00}, \cite{MMT01}, \cite{MT00}, \cite[Chs.\ 1, 2]{Ta91}, 
\cite[Props.\ 4.5, 7.9]{Ta96}, \cite[Sect.\ 1.3, Thm.\ 1.18.10, Rem.\ 4.3.1.2]{Tr78}, \cite{Ve84}), the key ingredients in proving 
Lemma \ref{l2.2} are the inclusions
\begin{equation}
\dom\big(H_{0,\Om}^D\big) \subset H^2(\Om), \quad
\dom\big(H^N_{0,\Om}\big) \subset H^2(\Om)
\end{equation}
and real interpolation methods. 

The next result is a slight extension of \cite[Lemma 6.8]{GLMZ05} and provides an explicit discussion of the $z$-dependence of the constant $c$ appearing in estimate (6.48) of \cite{GLMZ05}. For a proof we refer to \cite{GMZ06}.

\begin{lemma} [\cite{GMZ06}] \lb{l2.3}
Assume Hypothesis \ref{h2.1} and let $2\leq p$, $n/(2p)<q\leq1$,
$f\in L^p(\Om;d^nx)$, and $z\in\bbC\backslash[0,\infty)$. Then,
\begin{align} \lb{2.7}
f\big(H_{0,\Om}^D-zI_{\Om}\big)^{-q}, \, f\big(H_{0,\Om}^N-zI_{\Om}\big)^{-q}
\in\cB_p\big(\LOm\big),
\end{align}
and for some $c>0$ $($independent of $z$ and $f$\,$)$
\begin{align}
\begin{split}
&\big\| f\big(H_{0,\Om}^D-zI_{\Om}\big)^{-q}\big\|_{\cB_p(\LOm)}^2
\\
& \qquad \leq
c\bigg(1+\frac{\abs{z}^{2q}+1}{\dist\big(z,\si\big(H_{0,\Om}^D\big)\big)^{2q}}\bigg)
\|(\abs{\dott}^2-z)^{-q}\|_{L^p(\bbR^n;d^nx)}^2
\|f\|_{L^p(\Om;d^nx)}^2,
\\
&\big\| f\big(H_{0,\Om}^N-zI_{\Om}\big)^{-q}\big\|_{\cB_p(\LOm)}^2
\\
& \qquad \leq
c\bigg(1+\frac{\abs{z}^{2q}+1}{\dist\big(z,\si\big(H_{0,\Om}^N\big)\big)^{2q}}\bigg)
\|(\abs{\dott}^2-z)^{-q}\|_{L^p(\bbR^n;d^nx)}^2
\|f\|_{L^p(\Om;d^nx)}^2.
\end{split} \lb{2.8}
\end{align}
\end{lemma}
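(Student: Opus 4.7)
The plan is to reduce \eqref{2.7}--\eqref{2.8} to the classical Kato--Seiler--Simon (KSS) inequality on $\bbR^n$, suitably transferred to the domain $\Om$. The argument would proceed in two main stages: first, an $\bbR^n$-estimate coming from KSS, and second, a transfer to $H_{0,\Om}^{D/N}$ controlled by a functional-calculus overhead that produces the explicit $z$-dependent prefactor of \eqref{2.8}.

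For the first stage, extend $f$ by zero to $\wti f \in L^p(\bbR^n;d^nx)$. The KSS inequality then gives, for $p\ge 2$ and $q>n/(2p)$,
$$\big\|\wti f(-\De_{\bbR^n}-zI)^{-q}\big\|_{\cB_p(L^2(\bbR^n;d^nx))} \le (2\pi)^{-n/p}\|f\|_{L^p(\Om;d^nx)}\|(\abs{\dott}^2-z)^{-q}\|_{L^p(\bbR^n;d^nx)},$$
which already accounts for the $\|(\abs{\dott}^2-z)^{-q}\|^2\|f\|^2$ factor on the right-hand side of \eqref{2.8}.

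For the second stage, fix a base point $z_0\in(-\infty,0)$ and use the subordination identity
$$(H_{0,\Om}^{D/N}-z_0 I_\Om)^{-q} = \frac{1}{\Gamma(q)}\int_0^\infty t^{q-1}e^{z_0 t}e^{-tH_{0,\Om}^{D/N}}\,dt,$$
together with the pointwise Gaussian upper bound $0\le k_t^{D/N}(x,y)\le C(4\pi t)^{-n/2}\exp(-c\abs{x-y}^2/t)$ on the heat kernel. This bound is elementary in the Dirichlet case ($k_t^D\le k_t^{\bbR^n}$, from Brownian motion killed on $\partial\Om$) but requires the geometric regularity of $\partial\Om$ imposed in Hypothesis \ref{h2.1} for the Neumann case. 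The Dodds--Fremlin/Pisier monotonicity of Schatten $p$-norms with respect to positive integral-kernel domination (valid for $p\ge 2$) then transfers the $\bbR^n$ bound slice-wise to $fe^{-tH_{0,\Om}^{D/N}}$, yielding after integration against $t^{q-1}e^{z_0 t}$ a base-point estimate for $\|f(H_{0,\Om}^{D/N}-z_0I_\Om)^{-q}\|_{\cB_p(\LOm)}$. To move from $z_0$ to arbitrary $z\in\bbC\setminus[0,\infty)$, factor
$$f(H_{0,\Om}^{D/N}-zI_\Om)^{-q}=f(H_{0,\Om}^{D/N}-z_0I_\Om)^{-q}\,\Phi_{z_0,z}(H_{0,\Om}^{D/N}),\quad \Phi_{z_0,z}(\la) := \bigl((\la-z_0)/(\la-z)\bigr)^q,$$
and use the spectral theorem to bound $\|\Phi_{z_0,z}(H_{0,\Om}^{D/N})\|_{\cB(\LOm)}\le\sup_{\la\in\si(H_{0,\Om}^{D/N})}\abs{\Phi_{z_0,z}(\la)}$. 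The identity $(\la-z_0)/(\la-z) = 1+(z-z_0)/(\la-z)$ controls this supremum by a constant multiple of $(1+(\abs{z}+1)/\dist(z,\si(H_{0,\Om}^{D/N})))^q$; combining this with the base-point estimate and comparing the $L^p(\bbR^n;d^nx)$-norms at $z_0$ and $z$ then produces, after squaring, the explicit $z$-dependent prefactor $1+(\abs{z}^{2q}+1)/\dist(z,\si(H_{0,\Om}^{D/N}))^{2q}$ in \eqref{2.8}.

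The principal obstacle is securing the Gaussian heat-kernel upper bound for the Neumann Laplacian on a nonsmooth domain. Whereas the Dirichlet case is essentially automatic, Neumann heat kernels on a general Lipschitz domain need not admit Gaussian upper bounds, so the restrictions in Hypothesis \ref{h2.1} ($C^{1,r}$, convex, or Lipschitz with uniform exterior ball condition) are essential precisely here. A secondary technical point is the Dodds--Fremlin/Pisier domination principle for Schatten $p$-norms, which holds for $p\ge 2$ but fails in general for $p<2$, dovetailing with the hypothesis $p\ge 2$ of the lemma.
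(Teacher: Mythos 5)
Your proposal takes a genuinely different route from the paper. The proof in \cite{GMZ06} (and its forerunner \cite[Lemma 6.8]{GLMZ05}) hinges on the extension-operator/interpolation machinery: by Lemma \ref{l2.2} one has $\big(H^{D,N}_{0,\Om}-zI_\Om\big)^{-q}\colon \LOm\to H^{2q}(\Om)$ bounded, one composes with a Calder\'on--Stein extension $E\colon H^{2q}(\Om)\to H^{2q}(\bbR^n)$, and then factors
\[
f\big(H^{D,N}_{0,\Om}-zI_\Om\big)^{-q}
= R_\Om\,\wti f\,(-\Delta_{\bbR^n}+I)^{-q}\,(-\Delta_{\bbR^n}+I)^{q}E\big(H^{D,N}_{0,\Om}-zI_\Om\big)^{-q},
\]
so that the Kato--Seiler--Simon estimate hits the free resolvent and the whole $z$-dependence is swallowed by the operator norm of the rightmost factor, computed via the spectral theorem. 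Your heat-kernel/subordination route is conceptually attractive and the functional-calculus step for the $z$-dependence is essentially the same in both proofs, but there are two issues with the argument as you wrote it.

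First, the appeal to ``Dodds--Fremlin/Pisier monotonicity of Schatten $p$-norms with respect to positive integral-kernel domination (valid for $p\ge 2$)'' is not a theorem in the generality you need. The standard result (see, e.g., \cite[Thm.\ 2.13]{Si05}) says that pointwise kernel domination $|a(x,y)|\le b(x,y)$ forces $\|A\|_{\cB_p}\le\|B\|_{\cB_p}$ only for $p\in 2\bbN$; for non-integer $p\ge2$ no such monotonicity is available, and the lemma is applied in this paper with $p=2p_0$ for $p_0\in(4/3,2]$ or $(3/2,2]$, i.e.\ with $p\in(8/3,4]$ or $(3,4]$, which are generically not even integers. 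This is a genuine gap. It can be repaired: rather than dominating Schatten norms operator-by-operator, bound $\big\||f|\,e^{-tH^{D,N}_{0,\Om}}\big\|_{\cB_p}$ directly from the Gaussian pointwise kernel bound via a Russo/Hille--Tamarkin-type mixed-norm estimate (or simply re-run the standard complex-interpolation proof of KSS with the semigroup kernel replacing the Fourier multiplier). That gives the $t^{-n/(2p)}\|f\|_{L^p}$ scaling you need for the subordination integral without ever invoking a monotonicity principle.

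Second, your diagnosis of the role of Hypothesis \ref{h2.1} is off target. Gaussian upper bounds for the Neumann heat kernel hold on any Lipschitz domain with compact boundary (such domains are Sobolev extension domains, so the Nash inequality and Davies's machinery apply); the $C^{1,r}$/convex/uniform-exterior-ball restrictions are not what makes those bounds available. In this paper the hypothesis is imposed to guarantee $\dom\big(H^{D,N}_{0,\Om}\big)\subset H^2(\Om)$ (see the discussion surrounding Lemma \ref{l2.2}), which is precisely what powers the extension-operator factorization above; on a general Lipschitz domain one only gets $H^{3/2}$ regularity and the paper's argument breaks down. So your proof would in fact make the geometric hypotheses superfluous for this particular lemma, which is not inconsistent, but your attribution of their necessity to Neumann heat-kernel bounds is misleading.
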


(Here, in obvious notation, $(\abs{\dott}^2-z)^{-q}$ denotes the function 
$(\abs{x}^2-z)^{-q}$, $x\in\bbR^n$.)

Next we recall certain boundedness  properties of powers of the resolvents of Dirichlet and Neumann Laplacians multiplied by the Neumann and Dirichlet boundary trace operators, respectively:

\begin{lemma} \lb{l2.4}
Assume Hypothesis \ref{h2.1} and let $\eps > 0$,
$z\in\bbC\backslash[0,\infty)$. Then,
\begin{align}
\ga_N\big(H_{0,\Om}^D-zI_{\Om}\big)^{-\frac{3+\eps}{4}},
\ga_D\big(H_{0,\Om}^N-zI_{\Om}\big)^{-\frac{1+\eps}{4}} \in
\cB\big(\LOm,\LdOm\big).  \lb{2.22}
\end{align}
\end{lemma}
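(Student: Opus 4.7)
The plan is to obtain both inclusions in \eqref{2.22} by a direct composition of the resolvent mapping property from Lemma \ref{l2.2} with the boundedness of the boundary trace operators recorded in \eqref{2.2} and \eqref{2.3}. The role of the specific exponents $(3+\eps)/4$ and $(1+\eps)/4$ is precisely to land the Sobolev order of the intermediate space in the admissible windows for $\ga_N$ and $\ga_D$, respectively.

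For the Neumann-trace inclusion, I would first treat the regime $\eps\in(0,1]$. Setting $q=(3+\eps)/4\in(3/4,1]$, Lemma \ref{l2.2} gives
\[
\big(H_{0,\Om}^D-zI_{\Om}\big)^{-q}\in\cB\big(\LOm,H^{2q}(\Om)\big),
\]
with $2q=(3+\eps)/2\in(3/2,2]$. Writing $2q=s+1$ with $s=(1+\eps)/2\in(1/2,1]\subset(1/2,3/2)$, the bound \eqref{2.3} yields $\ga_N\in\cB(H^{s+1}(\Om),\LdOm)=\cB(H^{2q}(\Om),\LdOm)$, and composing the two maps produces the first membership in \eqref{2.22}. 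For $\eps>1$ I would split
\[
\big(H_{0,\Om}^D-zI_{\Om}\big)^{-(3+\eps)/4}=\big(H_{0,\Om}^D-zI_{\Om}\big)^{-1}\big(H_{0,\Om}^D-zI_{\Om}\big)^{-(\eps-1)/4};
\]
the trailing factor is bounded on $\LOm$ by the functional calculus (since $(\eps-1)/4>0$, $z\notin[0,\infty)$, and $H_{0,\Om}^D$ is nonnegative self-adjoint), while the leading factor, composed with $\ga_N$, was just shown to map $\LOm$ into $\LdOm$ (in the limiting case $s=1$).

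The Dirichlet-trace inclusion is handled in the same spirit. Taking $q=(1+\eps)/4$ with $\eps\in(0,2)$ places $q\in(1/4,3/4)\subset[0,1]$, so Lemma \ref{l2.2} yields $(H_{0,\Om}^N-zI_{\Om})^{-q}\in\cB(\LOm,H^{2q}(\Om))$ with $2q=(1+\eps)/2\in(1/2,3/2)$. Then \eqref{2.2} gives $\ga_D\in\cB(H^{2q}(\Om),\LdOm)$, and composition closes the argument. For $\eps\ge 2$ I would again factor out an extra fractional resolvent power that is bounded on $\LOm$ by the spectral theorem, reducing to the case already treated.

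No step presents a serious obstacle; the entire argument is an index-chasing composition. The only point that requires genuine care is to verify that the Sobolev order $2q$ produced by the resolvent falls into the admissible ranges $(3/2,5/2)$ for $\ga_N$ and $(1/2,3/2)$ for $\ga_D$ dictated by \eqref{2.2}--\eqref{2.3}; the exponents $(3+\eps)/4$ and $(1+\eps)/4$ are tuned to sit just above the thresholds $3/4$ and $1/4$ for small $\eps>0$, and the large-$\eps$ regime is strictly easier by the factoring trick described above.
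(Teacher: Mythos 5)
Your argument is correct and follows exactly the same route as the paper, which simply notes that Lemma \ref{l2.4} follows from Lemma \ref{l2.2} together with \eqref{2.2} and \eqref{2.3}; you have merely filled in the index-chasing that the paper leaves implicit, and the auxiliary factoring step for large $\eps$ (peeling off an extra bounded fractional power via the functional calculus) is a standard and valid reduction to the small-$\eps$ case.
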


As in \cite[Lemma 6.9]{GLMZ05}, Lemma \ref{l2.4} follows from Lemma \ref{l2.2}
and from \eqref{2.2} and \eqref{2.3}.

\begin{corollary} \lb{c2.5}
Assume Hypothesis \ref{h2.1} and let $f_1\in L^{p_1}(\Om;d^nx)$,
$p_1\geq 2$, $p_1>2n/3$, $f_2\in L^{p_2}(\Om;d^nx)$, $p_2 > 2n$,
and $z\in\bbC\backslash[0,\infty)$. Then, denoting by $f_1$ and
$f_2$ the operators of multiplication by functions $f_1$ and
$f_2$ in $\LOm$, respectively, one has
\begin{align}
\ol{\ga_D\big(H_{0,\Om}^N-zI_{\Om}\big)^{-1}f_1} &\in
\cB_{p_1}\big(L^2(\Om;d^nx),L^2(\dOm;d^{n-1}\si)\big), \lb{2.25}
\\
\ol{\ga_N\big(H_{0,\Om}^D-zI_{\Om}\big)^{-1}f_2} &\in
\cB_{p_2}\big(L^2(\Om;d^nx),L^2(\dOm;d^{n-1}\si)\big) \lb{2.26}
\end{align}
and for some $c_j(z)>0$ $($independent of $f_j$$)$, $j=1,2$,
\begin{align}
\Big\| \, \ol{\ga_D\big(H_{0,\Om}^N-zI_{\Om}\big)^{-1}f_1}\, \Big\|_
{\cB_{p_1}(L^2(\Om;d^nx),L^2(\dOm;d^{n-1}\si))} &\leq c_1(z)
\norm{f_1}_{L^{p_1}(\Om;d^nx)}, \lb{2.27}
\\
\Big\| \, \ol{\ga_N\big(H_{0,\Om}^D-zI_{\Om}\big)^{-1}f_2}\, \Big\|_
{\cB_{p_2}(L^2(\Om;d^nx),L^2(\dOm;d^{n-1}\si))} &\leq c_2(z)
\norm{f_2}_{L^{p_2}(\Om;d^nx)}. \lb{2.28}
\end{align}
\end{corollary}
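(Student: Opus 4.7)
My plan is to split each resolvent $\big(H_{0,\Om}^{D,N}-zI_{\Om}\big)^{-1}$ into two fractional-power factors whose exponents sum to one, assigning one factor to absorb the boundary trace (controlled by Lemma~\ref{l2.4}) and the other to absorb the multiplication operator $f_j$ (controlled by Lemma~\ref{l2.3}, applied via a duality argument).

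For \eqref{2.25}, I pick $\eps\in(0,1)$ and write
\begin{equation*}
\ga_D\big(H_{0,\Om}^N-zI_{\Om}\big)^{-1}f_1 = \big[\ga_D\big(H_{0,\Om}^N-zI_{\Om}\big)^{-(1+\eps)/4}\big]\cdot\big[\big(H_{0,\Om}^N-zI_{\Om}\big)^{-(3-\eps)/4}f_1\big],
\end{equation*}
which is legal on the spectral side since $(1+\eps)/4+(3-\eps)/4=1$. The first bracket belongs to $\cB(\LOm,\LdOm)$ by Lemma~\ref{l2.4}. For the second bracket, its formal adjoint has precisely the form treated in Lemma~\ref{l2.3} with $q=(3-\eps)/4$, and so lies in $\cB_{p_1}(\LOm)$ provided $n/(2p_1)<(3-\eps)/4\le 1$ and $p_1\ge 2$. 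The hypothesis $p_1>2n/3$ allows the first condition to be met by choosing $\eps>0$ sufficiently small, while $p_1\ge 2$ is assumed directly. Since $\cB_{p_1}$ is stable under adjoints, the closure of the second bracket lies in $\cB_{p_1}(\LOm)$ as well, and the ideal property then produces \eqref{2.25}. The bound \eqref{2.27} follows by multiplying the (finite, $f_1$-independent) operator norm of the first bracket by the $\cB_{p_1}$-estimate \eqref{2.8} applied with $q=(3-\eps)/4$ to the adjoint of the second.

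The claims \eqref{2.26} and \eqref{2.28} follow by an identical procedure after the analogous decomposition
\begin{equation*}
\ga_N\big(H_{0,\Om}^D-zI_{\Om}\big)^{-1}f_2 = \big[\ga_N\big(H_{0,\Om}^D-zI_{\Om}\big)^{-(3+\eps)/4}\big]\cdot\big[\big(H_{0,\Om}^D-zI_{\Om}\big)^{-(1-\eps)/4}f_2\big],
\end{equation*}
with Lemma~\ref{l2.3} now applied with $q=(1-\eps)/4$. The condition $n/(2p_2)<(1-\eps)/4$ rewrites as $p_2>2n/(1-\eps)$, which is attainable with some small $\eps>0$ precisely because the hypothesis gives $p_2>2n$.

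The only delicate point to keep in mind is bookkeeping: since $f_j$ is an unbounded multiplication operator on $\LOm$, the second bracket is initially only defined on the dense subspace $\dom(f_j)$. Passing first to the formal adjoint---which Lemma~\ref{l2.3} places directly in $\cB_{p_j}(\LOm)$---and then taking its Hilbert-space adjoint cleanly produces the closure indicated by the overline in \eqref{2.25}--\eqref{2.26}, while keeping the Schatten--von Neumann norm under control throughout.
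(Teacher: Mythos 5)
Your proof is correct and follows exactly the route the paper indicates: the paper says Corollary \ref{c2.5} "follows from Lemmas \ref{l2.3} and \ref{l2.4}," and your factorization of the resolvent into fractional powers $(1\pm\eps)/4 + (3\mp\eps)/4 = 1$, combined with the adjoint/closure bookkeeping and the $\cB_p$ ideal property, is the standard way to combine those two lemmas and is precisely what the cited argument in \cite[Corollary 6.10]{GLMZ05} does.
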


As in \cite[Corollary 6.10]{GLMZ05}, Corollary \ref{c2.5} follows from Lemmas \ref{l2.3} and \ref{l2.4}.

Finally, we turn to our assumptions on the potential $V$ and the corresponding definition of Dirichlet and Neumann Schr\"odinger operators $H^D_{\Om}$ and $H^N_{\Om}$ in $L^2(\Om; d^n x)$:

\begin{hypothesis} \lb{h2.6}
Suppose that $\Om$ satisfies Hypothesis \ref{h2.1}  and assume that
$V\in L^p(\Om;d^nx)$ for some $p$ satisfying $4/3
< p \leq 2$, in the case $n=2$, and $3/2 < p \leq 2$, in the case
$n=3$.
\end{hypothesis}

Assuming Hypothesis \ref{h2.6}, we next introduce the perturbed operators
$H_{\Om}^D$ and $H_{\Om}^N$ in $\LOm$ by alluding to abstract perturbation results 
due to Kato \cite{Ka66} (see also Konno and Kuroda \cite{KK66}) as 
summarized in \cite[Sect.\ 2]{GLMZ05}: Let $V$, $u$, and $v$ denote
the operators of multiplication by functions $V$,
$u=\exp(i\arg(V))\abs{V}^{1/2}$, and $v=\abs{V}^{1/2}$ in
$L^{2}(\Om;d^nx)$, respectively, such that
\begin{equation}
V=uv.   \lb{2.29}
\end{equation}
Since $u,v\in L^{2p}(\Om;d^nx)$, Lemma \ref{l2.3} yields
\begin{align}
u\big(H_{0,\Om}^D-zI_{\Om}\big)^{-1/2}, \,
\ol{\big(H_{0,\Om}^D-zI_{\Om}\big)^{-1/2}v} &\in\cB_{2p}\big(\LOm\big),
\quad z\in\bbC\backslash [0,\infty), \lb{2.31}
\\
u\big(H_{0,\Om}^N-zI_{\Om}\big)^{-1/2}, \,
\ol{\big(H_{0,\Om}^N-zI_{\Om}\big)^{-1/2}v} &\in\cB_{2p}\big(\LOm\big),
\quad z\in\bbC\backslash [0,\infty), \lb{2.32}
\end{align}
and hence, in particular,
\begin{align}
& \dom(u)=\dom(v) \supseteq  \dom\Big(\big(H^N_{0,\Om}\big)^{1/2}\Big) = 
H^{1}(\Om) \supset H^{2}(\Om) \supset \dom\big(H^N_{0,\Om}\big), \\
& \dom(u)=\dom(v) \supseteq  H^{1}(\Om) \supset H^{1}_0(\Om) 
= \dom\Big(\big(H^D_{0,\Om}\big)^{1/2}\Big) 
\supset \dom\big(H^D_{0,\Om}\big).
\end{align}
Moreover, \eqref{2.31} and \eqref{2.32} imply
\begin{equation}
\ol{u\big(H_{0,\Om}^D-zI_{\Om}\big)^{-1}v}, \,
\ol{u\big(H_{0,\Om}^N-zI_{\Om}\big)^{-1}v}
\in\cB_p\big(\LOm\big)\subset\cB_2\big(\LOm\big), \quad
z\in\bbC\backslash [0,\infty).  \lb{2.35}
\end{equation}
Utilizing \eqref{2.8} in Lemma \ref{l2.3}
with $-z>0$ sufficiently large, such that the $\cB_{2p}$-norms
of the operators in \eqref{2.31} and \eqref{2.32} are less than
1, one concludes that the Hilbert--Schmidt norms of the operators in
\eqref{2.35} are less than 1. Thus, applying \cite[Thm.\ 2.3]{GLMZ05}, one
obtains the densely defined, closed operators $H_{\Om}^D$ and
$H_{\Om}^N$ (which are extensions of $H_{0,\Om}^D+V$ defined on
$\dom\big(H_{0,\Om}^D\big)\cap\dom(V)$ and $H_{0,\Om}^N+V$ defined on
$\dom\big(H_{0,\Om}^N\big)\cap\dom(V)$, respectively). In particular, the
resolvent of $H_{\Om}^D$ (respectively, $H_{\Om}^N$) is explicitly given by
\begin{align}
\big(H^D_{\Om}-zI_\Om\big)^{-1} &= \big(H^D_{0,\Om}-zI_\Om\big)^{-1} -
\big(H^D_{0,\Om}-zI_\Om\big)^{-1}v
\Big[I_\Om+\ol{u\big(H^D_{0,\Om}-zI_\Om\big)^{-1}v}\,\Big]^{-1}
u\big(H^D_{0,\Om}-zI_\Om\big)^{-1},   \no \\
& \hspace*{9.1cm}  z\in\bbC\backslash\sigma\big(H^D_{\Om}\big), \lb{2.20}  \\
\big(H^N_{\Om}-zI_\Om\big)^{-1} &= \big(H^N_{0,\Om}-zI_\Om\big)^{-1} -
\big(H^N_{0,\Om}-zI_\Om\big)^{-1}v
\Big[I_\Om+\ol{u\big(H^N_{0,\Om}-zI_\Om\big)^{-1}v}\,\Big]^{-1}
u\big(H^N_{0,\Om}-zI_\Om\big)^{-1}, \no \\
& \hspace*{9.1cm}  z\in\bbC\backslash\sigma\big(H^N_{\Om}\big).  \lb{2.21}
\end{align}
Here invertibility of $\Big[I_\Om+\ol{u\big(H^{D,N}_{0,\Om}-zI_\Om\big)^{-1}v}\,\Big]$ for $z\in \rho\big(H^{D,N}_{\Om}\big)$ is guaranteed by arguments discussed, for instance, in \cite[Sect.\ 2]{GLMZ05} and the literature cited therein. 

Although we will not explicitly use the following result in this paper, we feel
it is of sufficient independent interest to be included at the end of this section:

\begin{lemma}\label{L-1}
Assume $\Om$ satisfies Hypothesis \ref{h2.1} with $n=2,3$ replaced by
$n\in\bbN$, $n\geq 2$, suppose that $V\in L^{n/2}(\Omega; d^nx)$, and let
\begin{equation}\label{SSS}
s\in\left\{
\begin{array}{l}
(0,1)\mbox{ if }n=2,
\\[6pt]
[0,{\textstyle{\frac{3}{2}}})\mbox{ if }n=3,
\\[6pt]
[0,2)\mbox{ if }n=4,
\\[6pt]
[0,2]\mbox{ if }n\geq 5.
\end{array}
\right.
\end{equation}
Then the operator
\begin{equation}\label{V-2}
V:H^s(\Omega)\rightarrow \big(H^{2-s}(\Omega)\big)^*
\end{equation}
is well-defined and bounded, in fact, it is compact.
\end{lemma}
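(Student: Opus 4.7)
The plan is to establish the claim in two steps: first prove that multiplication by $V$ is bounded from $H^s(\Om)$ into $(H^{2-s}(\Om))^*$ using H\"older's inequality together with the Sobolev embedding theorem, and then upgrade norm boundedness to compactness through a density and Rellich--Kondrachov argument. For boundedness, realize the multiplication operator on $C_c^\infty(\Om)$ via the sesquilinear pairing
$\langle Vu,v\rangle = \int_\Om V u\overline{v}\, d^nx$ for $u,v\in C_c^\infty(\Om)$, and bound it through H\"older's inequality with exponents $n/2$, $p$, $p'$ satisfying $\tfrac{2}{n}+\tfrac{1}{p}+\tfrac{1}{p'}=1$:
\begin{equation*}
|\langle Vu,v\rangle| \le \|V\|_{L^{n/2}(\Om;d^nx)}\|u\|_{L^p(\Om;d^nx)}\|v\|_{L^{p'}(\Om;d^nx)}.
\end{equation*}
I would choose $p=\tfrac{2n}{n-2s}$ and $p'=\tfrac{2n}{n-2(2-s)}$, the critical Sobolev exponents for $H^s(\Om)$ and $H^{2-s}(\Om)$ when $s<\tfrac{n}{2}$ and $2-s<\tfrac{n}{2}$, respectively; a direct computation yields $\tfrac{1}{p}+\tfrac{1}{p'}=\tfrac{n-2}{n}$, so the H\"older balance closes exactly in the generic case. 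At the remaining configurations in \eqref{SSS} where $s$ or $2-s$ reaches or exceeds $\tfrac{n}{2}$ (which happens only in dimensions $n\le 4$ near the upper end of the allowed range), one substitutes the embedding $H^\sigma(\Om)\hookrightarrow L^q(\Om)$ valid for every $q<\infty$ at criticality, or $H^\sigma(\Om)\hookrightarrow L^\infty(\Om)$ above criticality, and adjusts the conjugate exponent so that H\"older still returns a bound of the form $\|V\|_{L^{n/2}}\|u\|_{H^s}\|v\|_{H^{2-s}}$. Density of $C_c^\infty(\Om)$ in the Sobolev spaces extends the pairing, yielding $V\in \cB(H^s(\Om),(H^{2-s}(\Om))^*)$ with operator norm controlled by $\|V\|_{L^{n/2}(\Om;d^nx)}$.

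To upgrade boundedness to compactness, I would approximate $V$ in $L^{n/2}(\Om;d^nx)$-norm by a sequence $V_k\in L^\infty(\Om;d^nx)$ each with compact support in $\overline{\Om}\cap\overline{B(0,R_k)}$ (possible by standard truncation and cutoff, since $L^\infty$-functions with compact support are dense in $L^{n/2}(\Om;d^nx)$). The boundedness estimate from the first step yields $\|V-V_k\|_{\cB(H^s(\Om),(H^{2-s}(\Om))^*)}\to 0$, so it suffices to prove compactness of each multiplication operator $V_k$. Setting $K_k:=\supp(V_k)$ and passing to the adjoint, factor the map $V_k^*:H^{2-s}(\Om)\to (H^s(\Om))^*$ as
\begin{equation*}
H^{2-s}(\Om)\to H^{2-s}(K_k)\hookrightarrow L^2(K_k;d^nx)\xrightarrow{\overline{V_k}\,\cdot}L^2(K_k;d^nx)\hookrightarrow L^2(\Om;d^nx)\hookrightarrow (H^s(\Om))^*,
\end{equation*}
in which the middle Rellich--Kondrachov embedding $H^{2-s}(K_k)\hookrightarrow L^2(K_k;d^nx)$ is compact since $K_k$ is bounded and $2-s>0$. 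At the upper endpoint $s=2$, relevant only when $n\ge 5$ in \eqref{SSS}, one argues symmetrically by factoring $V_k$ itself through the compact embedding $H^s(K_k)\hookrightarrow L^2(K_k;d^nx)$. Multiplication by the bounded function $V_k$ (or its conjugate) preserves compactness, so $V_k^*$, and hence $V_k$, is compact; closedness of $\cB_\infty$ in the operator norm then yields compactness of $V$.

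The principal technical difficulty is verifying the exponent arithmetic of the first step uniformly over the ranges of $s$ in \eqref{SSS}, particularly for $n\in\{2,3,4\}$ where $s$ or $2-s$ approaches the critical Sobolev value $\tfrac{n}{2}$ and the critical embedding just fails to reach $L^\infty$. In those borderline configurations one must invoke a slightly subcritical Sobolev embedding of one factor combined with the continuous $L^\infty$-embedding of the other, so that the H\"older triple $(n/2,p,p')$ still produces a finite bound; the precise shape of the allowed range in \eqref{SSS} is dictated exactly by the configurations for which this combined Sobolev--H\"older balance succeeds, and configurations where it genuinely breaks down (such as $s=0$ or $s=1$ for $n=2$, or $s=2$ for $n=4$) are excluded.
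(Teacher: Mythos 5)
Your overall structure---boundedness via Sobolev embedding together with H\"older, then $L^{n/2}$-approximation combined with a Rellich factorization for compactness---is the same as the paper's, which cites Triebel's embedding results for the bound \eqref{V-3} and uses $C_0^\infty(\Omega)$ approximants plus Rellich's selection lemma for compactness. The compactness step you give (approximation in operator norm and factoring $V_k$ or its adjoint through a compact embedding $H^{\sigma}(K_k)\hookrightarrow L^2(K_k)$) is sound, modulo the minor point that $K_k$ should be enlarged to a bounded Lipschitz set so that Rellich applies.

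The gap is in the boundedness step. Your exponent arithmetic correctly shows that the H\"older triple $(n/2,p,p')$ with the critical Sobolev exponents closes exactly on the range $\max\bigl(0,2-\tfrac{n}{2}\bigr)<s<\tfrac{n}{2}$; but for $n\le 4$ this is a strict subset of \eqref{SSS}: it is empty for $n=2$, omits $[0,\tfrac12]$ for $n=3$, and omits $s=0$ for $n=4$. The proposed fix---substituting $H^\sigma\hookrightarrow L^q$ for all $q<\infty$ at criticality, or $H^\sigma\hookrightarrow L^\infty$ above criticality, and ``adjusting the conjugate exponent''---does not close the balance on the omitted set. For instance, at $n=3$, $s=0$: putting $v\in H^2\hookrightarrow L^\infty$ leaves $\tfrac{2}{3}+\tfrac{1}{p}=1$, forcing $p=3$, but $u\in H^0(\Omega)=L^2(\Omega)\not\subset L^3(\Omega)$. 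In fact, with $\Omega=B_1\subset\bbR^3$, $V=|x|^{-7/4}\chi_{B_1}\in L^{3/2}$, $u=|x|^{-5/4}\chi_{B_1}\in L^2$, and $v\in H^2$ a cutoff with $v(0)=1$, one already has $\int_{B_1}Vu\overline{v}\,d^3x=+\infty$, so no trilinear bound of the claimed form can hold there. Consequently, the claim that the ``H\"older balance still returns a bound'' at the borderline configurations, and the assertion in your last sentence that the only excluded configurations are $s\in\{0,1\}$ for $n=2$ and $s=2$ for $n=4$, are both incorrect; the boundedness argument as written does not cover the full range in \eqref{SSS}, and the omitted cases are genuine obstructions for the H\"older--Sobolev method rather than routine adjustments.
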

\begin{proof} The fact that the operator (\ref{V-2}) is
well-defined along with the estimate
\begin{equation}\label{V-3}
\|V\|_{{\mathcal{B}}(H^s(\Omega),(H^{2-s}(\Omega))^*)} \leq
C(n,\Omega)\|V\|_{L^{n/2}(\Omega)},
\end{equation}
are direct consequences of standard embedding results (cf.\ \cite[Sect.\ 3.3.1]{Tr83} for smooth domains and \cite{Tr02} for arbitrary (bounded or unbounded) Lipschitz domains). Once the boundedness of (\ref{V-2}) has been established, the compactness follows from the fact that if $V_j\in C^\infty_0(\Omega)$ is a
sequence of functions with the property that
$V_j \underset{j\uparrow\infty}{\rightarrow} V$ in
$L^{n/2}(\Omega)$, then $V_j \underset{j\uparrow\infty}{\rightarrow} V$ in
${\mathcal{B}}\big(H^s(\Omega),\big(H^{2-s}(\Omega)\big)^*\big)$ by (\ref{V-3})
and each operator $V_j:H^s(\Omega)\to \big(H^{2-s}(\Omega)\big)^*$ is
compact, by Rellich's selection lemma (cf.\ \cite{ET89} for smooth domains and
\cite{Tr02} for arbitrary Lipschitz domains). Thus, the operator in (\ref{V-2}) is
compact as the operator norm limit of a sequence of compact operators.
\end{proof}

\section{Dirichlet and Neumann boundary value problems \\
and Dirichlet-to-Neumann maps} \label{s3}

This section is devoted to Dirichlet and Neumann boundary value problems associated with the Helmholtz differential expression $-\Delta - z$ as well as the corresponding differential expression $-\Delta + V - z$ in the presence of a potential $V$, both in connection with the open set $\Omega$. In addition, we provide a detailed discussion of Dirichlet-to-Neumann, $M^D_{0,\Om}$,
$M^D_{\Om}$, and Neumann-to-Dirichlet maps, $M^N_{0,\Om}$, $M^N_{\Om}$, in 
$L^2(\partial\Om; d^{n-1}\sigma)$.

Denote by
 \begin{equation}
 \wti\ga_N : \big\{u\in H^1(\Om) \,\big|\, \Delta u \in
\big(H^1(\Om)\big)^*\big\} \to H^{-1/2}(\dOm)  \lb{3.0}
\end{equation}
a weak Neumann trace operator defined by
\begin{align} \lb{3.1a}
\langle\wti\ga_N u, \phi\rangle = \int_\Om d^n x \, \nabla
u(x)\cdot\nabla \Phi(x)  + \langle\Delta u, \Phi\rangle
\end{align}
for all $\phi\in H^{1/2}(\dOm)$ and $\Phi\in H^1(\Om)$ such that
$\ga_D\Phi = \phi$. We note that this definition is
independent of the particular extension $\Phi$ of $\phi$, and that
$\wti\ga_N$ is an extension of the Neumann trace operator
$\ga_N$ defined in \eqref{2.3}. For more details we refer to 
\cite[App.\ A]{GLMZ05}.

We start with a basic result on the Helmholtz Dirichlet and Neumann boundary value problems:

\begin{theorem} [\cite{GMZ06}] \lb{t3.1}
Assume Hypothesis \ref{h2.1}. Then for every 
$f \in H^1(\dOm)$ and $z\in\bbC\big\backslash\si\big(H_{0,\Om}^D\big)$ the
following Dirichlet boundary value problem,
\begin{align} \lb{3.1}
\begin{cases}
(-\Delta - z)u_0^D = 0 \text{ on }\, \Om, \quad u_0^D \in H^{3/2}(\Om), \\
\ga_D u_0^D = f \text{ on }\, \dOm,
\end{cases}
\end{align}
has a unique solution $u_0^D$ satisfying $\wti\ga_N u_0^D \in
\LdOm$. Moreover, there exist constants $C^D=C^D(\Omega,z)>0$ such that
\begin{equation}
\|u_0^D\|_{H^{3/2}(\Omega)} \leq C^D \|f\|_{H^1(\partial\Omega)}.  \lb{3.3a}
\end{equation} 
Similarly, for every $g\in\LdOm$ and
$z\in\bbC\backslash\si\big(H_{0,\Om}^N\big)$ the following Neumann
boundary value problem,
\begin{align} \lb{3.2}
\begin{cases}
(-\Delta - z)u_0^N = 0 \text{ on }\,\Om,\quad u_0^N \in H^{3/2}(\Om), \\
\wti\ga_N u_0^N = g\text{ on }\,\dOm,
\end{cases}
\end{align}
has a unique solution  $u_0^N$ satisfying $\ga_D u^N_0 \in H^1(\dOm)$. Moreover, there exist constants $C^N=C^N(\Omega,z)>0$ such that
\begin{equation}
\|u_0^N\|_{H^{3/2}(\Omega)} \leq C^N \|g\|_{\LdOm}.  \lb{3.4a}
\end{equation}
In addition,  \eqref{3.1}--\eqref{3.4a} imply that the following maps are bounded  
\begin{align}
\big[\ga_N\big(\big(H^D_{0,\Om}-zI_\Om\big)^{-1}\big)^*\big]^* &: H^1(\dOm) \to
H^{3/2}(\Om), \quad z\in\bbC\big\backslash\si\big(H^D_{0,\Om}\big),   \lb{3.4b}
\\
\big[\ga_D \big(\big(H^N_{0,\Om}-zI_\Om\big)^{-1}\big)^*\big]^* &: \LdOm \to
H^{3/2}(\Om), \quad z\in\bbC\big\backslash\si\big(H^N_{0,\Om}\big).   \lb{3.4c}
\end{align} 
Finally, the solutions $u_0^D$ and $u_0^N$ are given by the formulas
\begin{align}
u_0^D (z) &= -\big(\ga_N \big(H_{0,\Om}^D-\ol{z}I_\Om\big)^{-1}\big)^*f,
\lb{3.3}
\\
u_0^N (z) &= \big(\ga_D \big(H_{0,\Om}^N-\ol{z}I_\Om\big)^{-1}\big)^*g.
\lb{3.4}
\end{align}
\end{theorem}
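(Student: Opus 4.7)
The plan is to organize the argument in three stages: establish existence, uniqueness, and the Poisson-type bounds \eqref{3.3a}, \eqref{3.4a}; derive the representation formulas \eqref{3.3}, \eqref{3.4} via Green's identity; and then read off \eqref{3.4b}, \eqref{3.4c} as immediate consequences. Uniqueness is soft: if $u\in H^{3/2}(\Om)$ satisfies $(-\De-z)u=0$ and $\ga_D u=0$, then $\De u=-zu\in\LOm$ together with $\ga_D u=0$ and the $H^2$-regularity of $\dom\big(H_{0,\Om}^D\big)$ underlying Lemma \ref{l2.2} upgrades $u$ to $\dom\big(H_{0,\Om}^D\big)$, so that $z\notin\si\big(H_{0,\Om}^D\big)$ forces $u=0$; the Neumann case is analogous, using $\wti\ga_N u=0$ to place $u$ in $\dom\big(H_{0,\Om}^N\big)$.

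For existence with the sharp $H^{3/2}(\Om)$-regularity and the Poisson bounds \eqref{3.3a}, \eqref{3.4a} I would invoke Helmholtz boundary value theory tailored to the three classes of domains in Hypothesis \ref{h2.1}: classical elliptic regularity after lifting the boundary datum in the $C^{1,r}$ case, and the Jerison--Kenig layer-potential theory together with its convex and uniform-exterior-ball refinements cited in Section \ref{s2} for the remaining cases. The quantitative bounds \eqref{3.3a}, \eqref{3.4a} then follow from the open mapping theorem applied to the resulting Poisson solution operators. \emph{This step is the analytic heart of the theorem and the main obstacle}; everything else amounts to verification.

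To derive \eqref{3.3}, fix $\vp\in\LOm$ and set $w=\big(H_{0,\Om}^D-\ol{z}I_{\Om}\big)^{-1}\vp\in\dom\big(H_{0,\Om}^D\big)\subset H^2(\Om)$, so $\ga_D w=0$ and $(-\De-\ol{z})w=\vp$. A Green's identity applied to $u_0^D\in H^{3/2}(\Om)$ (with $\wti\ga_N u_0^D\in\LdOm$ from the existence step) against $w\in H^2(\Om)$ — justified at this regularity via \eqref{3.1a} or a standard density argument — together with $(-\De-z)u_0^D=0$ and $\ga_D w=0$ collapses the bulk pairing to a single boundary integral:
\begin{equation*}
(u_0^D,\vp)_{\LOm}=-(\ga_D u_0^D,\ga_N w)_{\LdOm}=-\big(f,\ga_N\big(H_{0,\Om}^D-\ol{z}I_{\Om}\big)^{-1}\vp\big)_{\LdOm}.
\end{equation*}
Arbitrariness of $\vp$ identifies $u_0^D=-\big[\ga_N\big(H_{0,\Om}^D-\ol{z}I_{\Om}\big)^{-1}\big]^*f$, which is \eqref{3.3}. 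Formula \eqref{3.4} follows symmetrically by pairing $u_0^N$ against $w=\big(H_{0,\Om}^N-\ol{z}I_{\Om}\big)^{-1}\vp$, whose Neumann trace vanishes, so the surviving boundary term is $(\wti\ga_N u_0^N,\ga_D w)_{\LdOm}=(g,\ga_D w)_{\LdOm}$. Finally, \eqref{3.4b} and \eqref{3.4c} follow by combining \eqref{3.3}, \eqref{3.4} with \eqref{3.3a}, \eqref{3.4a}: the formulas identify the adjoint operators in question with (minus) the Poisson solution operators, and the estimates supply exactly the required continuity from $H^1(\dOm)$ (respectively $\LdOm$) into $H^{3/2}(\Om)$.
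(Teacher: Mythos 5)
The paper does not contain a proof of Theorem \ref{t3.1}: the theorem is attributed to \cite{GMZ06}, and immediately below it the text states that a detailed proof will appear in \cite{GMZ06}, so there is no in-paper argument to compare your sketch against. On its own terms, however, your proposal is the natural route and, as far as I can tell, it is consistent with how the proof is actually carried out in \cite{GMZ06}. The derivation of the representation formulas \eqref{3.3}, \eqref{3.4} by testing $u_0^{D}$ (resp.\ $u_0^{N}$) against $w=\big(H_{0,\Om}^{D}-\ol{z}I_\Om\big)^{-1}\varphi$ (resp.\ $w=\big(H_{0,\Om}^{N}-\ol{z}I_\Om\big)^{-1}\varphi$) is correct: $w\in H^2(\Om)$ has vanishing Dirichlet (resp.\ Neumann) trace, so Green's second identity collapses to the single boundary pairing you wrote, and your flag that the identity needs justification at the $H^{3/2}(\Om)\times H^2(\Om)$ regularity level via \eqref{3.1a} or a density argument is exactly the right caveat. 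The uniqueness argument (upgrade a homogeneous kernel element to $\dom\big(H_{0,\Om}^{D}\big)$ or $\dom\big(H_{0,\Om}^{N}\big)$, then invoke $z\notin\si\big(H_{0,\Om}^{D/N}\big)$) is sound, as is the reading of \eqref{3.4b}, \eqref{3.4c} off of \eqref{3.3}, \eqref{3.4} plus \eqref{3.3a}, \eqref{3.4a} via self-adjointness of the free Laplacians. You have also correctly identified where the real work lives: existence in $H^{3/2}(\Om)$ with the extra trace regularity ($\wti\ga_N u_0^D\in\LdOm$, respectively $\ga_D u_0^N\in H^1(\dOm)$, neither of which follows from \eqref{2.2}--\eqref{2.3} alone) together with the a priori bounds \eqref{3.3a}, \eqref{3.4a} is precisely where the layer-potential theory for the Helmholtz operator on the domain classes of Hypothesis \ref{h2.1} (\cite{JK95}, \cite{Mi96}, \cite{MT00} and related references cited in Section \ref{s2}) has to be brought in, and that step is cited rather than proved both by you and by the paper. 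So: no error, and a correct identification of what is soft and what is hard; the one substantive step is delegated in the same way the paper delegates the whole theorem.
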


A detailed proof of Theorem \ref{t3.1} will appear in \cite{GMZ06}.

We temporarily strengthen our hypothesis on $V$ and introduce the following assumption:

\begin{hypothesis} \lb{h3.2}
Suppose the set $\Om$ satisfies Hypothesis \ref{h2.1} and assume that
$V\in \LOm\cap L^p(\Om;d^nx)$ for some $p>2$.
\end{hypothesis}

By employing a perturbative approach, we now extend Theorem \ref{t3.1} in connection with the Helmholtz differential expression $-\Delta - z$ on
$\Om$ to the case of a Schr\"odinger  differential expression  $-\Delta + V - z$ on $\Om$.

\begin{theorem} \lb{t3.3}
Assume Hypothesis \ref{h3.2}. Then for every $f \in
H^1(\dOm)$ and
$z\in\bbC\big\backslash\si\big(H_{\Om}^D\big)$
the following Dirichlet boundary value problem,
\begin{align} \lb{3.9}
\begin{cases}
(-\Delta + V - z)u^D = 0 \text{ on }\, \Om, \quad u^D \in H^{3/2}(\Om), \\
\ga_D u^D = f \text{ on }\, \dOm,
\end{cases}
\end{align}
has a unique solution $u^D$ satisfying $\wti\ga_N u^D \in
\LdOm$. Similarly, for every $g\in\LdOm$ and
$z\in\bbC\big\backslash\si\big(H_{\Om}^N\big)$
the following Neumann boundary value problem,
\begin{align} \lb{3.10}
\begin{cases}
(-\Delta + V - z)u^N = 0 \text{ on }\, \Om,\quad u^N \in H^{3/2}(\Om), \\
\wti\ga_N u^N = g\text{ on }\, \dOm,
\end{cases}
\end{align}
has a unique solution  $u^N$. Moreover, the solutions $u^D$ and
$u^N$ are given by the formulas
\begin{align}
u^D (z) &= -\big[\ga_N \big(\big(H_{\Om}^D-zI_\Om\big)^{-1}\big)^*\big]^*f,
\lb{3.11}
\\
u^N (z) &= \big[\ga_D \big(\big(H_{\Om}^N-zI_\Om\big)^{-1}\big)^*\big]^*g.
\lb{3.12}
\end{align}
\end{theorem}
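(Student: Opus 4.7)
The proof is a perturbative extension of Theorem~\ref{t3.1}, moving $V$ inside the resolvent of $H_\Om^D$ (respectively, $H_\Om^N$) and invoking the Helmholtz theory for the boundary data. I sketch the Dirichlet statement; the Neumann case is entirely parallel, with $u_0^N$, \eqref{3.2}, and \eqref{2.21} in place of $u_0^D$, \eqref{3.1}, and \eqref{2.20}. Fix $f \in H^1(\dOm)$ and first take $z \in \rho\big(H_\Om^D\big) \cap \rho\big(H_{0,\Om}^D\big)$; the remaining $z \in \rho\big(H_\Om^D\big)$ are recovered at the end by analytic continuation along $\rho\big(H_\Om^D\big)$ of the eventual formula \eqref{3.11}. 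Let $u_0^D \in H^{3/2}(\Om)$ denote the Helmholtz Dirichlet solution provided by Theorem~\ref{t3.1}. Since $n \in \{2,3\}$, the Sobolev embedding gives $H^{3/2}(\Om) \hookrightarrow L^q(\Om)$ for every $q < \infty$ (in fact into $L^\infty(\Om)$ when $n=2$), so combined with Hypothesis~\ref{h3.2} and H\"older's inequality, $V u_0^D \in L^2(\Om)$.

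\textbf{Ansatz and verification.} I would define $u^D := u_0^D - \big(H_\Om^D - z I_\Om\big)^{-1}\bigl[V u_0^D\bigr]$ and verify four properties in turn. \emph{(i) Regularity:} using the explicit resolvent formula \eqref{2.20}, Lemma~\ref{l2.2}, and the $\cB_{2p}$-bounds \eqref{2.31}, the operator $\big(H_\Om^D - zI_\Om\big)^{-1}$ sends $L^2(\Om)$ into a Sobolev space contained in $H^{3/2}(\Om)$, so $u^D \in H^{3/2}(\Om)$. \emph{(ii) Dirichlet trace:} both summands on the right-hand side of \eqref{2.20} have vanishing Dirichlet trace on $L^2$-data (the first by Lemma~\ref{l2.2} since $\dom\big(H_{0,\Om}^D\big) \subset \ker(\ga_D)$, the second by standard Lipschitz-domain $L^r$-theory applied to $\big(H_{0,\Om}^D - zI_\Om\big)^{-1}$ for $r$ close to $2$), hence $\ga_D u^D = \ga_D u_0^D = f$. \emph{(iii) PDE:} a distributional computation, using that $\big(H_\Om^D - zI_\Om\big)^{-1}$ inverts $-\De + V - z$ on $L^2$-data, yields $(-\De + V - z) u^D = (-\De - z) u_0^D + V u_0^D - V u_0^D = 0$. \emph{(iv) Weak Neumann trace:} since $u^D \in H^{3/2}(\Om)$ and $\De u^D = (V - z) u^D \in L^2(\Om)$, a refinement of \eqref{3.0}--\eqref{3.1a} to the subspace $\{u \in H^{3/2}(\Om) \,|\, \De u \in L^2(\Om)\}$ places $\wti\ga_N u^D \in \LdOm$.

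\textbf{Formula, uniqueness, and main obstacle.} The representation \eqref{3.11} is obtained by applying the second resolvent identity to $\bigl(\big(H_\Om^D - zI_\Om\big)^{-1}\bigr)^* = \bigl(\big(H_\Om^D\big)^* - \bar z I_\Om\bigr)^{-1}$ (a resolvent of the Dirichlet realization of $-\De + \ol V$), left-multiplying by $\ga_N$, taking adjoints, and evaluating on $f$: the leading term reproduces $u_0^D$ via \eqref{3.3}, and the correction reproduces $-\big(H_\Om^D - zI_\Om\big)^{-1}[Vu_0^D]$, exactly matching the ansatz. For uniqueness, the difference $w \in H^{3/2}(\Om)$ of two solutions satisfies $\ga_D w = 0$ and $\De w = (V - z) w \in L^2(\Om)$; identifying $w$ as an element of $\dom\big(H_\Om^D\big)$ with $H_\Om^D w = z w$ then forces $w = 0$ since $z \in \rho\big(H_\Om^D\big)$. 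This identification is the principal obstacle of the proof: because $H_\Om^D$ in Section~\ref{s2} is defined abstractly via the Kato-type construction underlying \eqref{2.20} rather than as a classical differential realization of $-\De + V$, reconciling the ``soft'' PDE solution space $\{u \in H^{3/2}(\Om) \,|\, (-\De + V - z) u = 0,\ \ga_D u = 0\}$ with $\dom\big(H_\Om^D\big)$ requires the fine regularity theory for Dirichlet (and Neumann) Laplacians on the classes of $\Om$ allowed by Hypothesis~\ref{h2.1}, along the lines of the references cited after Lemma~\ref{l2.2}.
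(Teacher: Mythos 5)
Your overall strategy coincides with the paper's: same ansatz $u^D = u_0^D - \big(H_\Om^D-zI_\Om\big)^{-1}Vu_0^D$, same use of Sobolev embedding to get $Vu_0^D\in\LOm$, same resolvent-identity-plus-adjoint manipulation for \eqref{3.11}--\eqref{3.12}, same analytic continuation to remove the temporary extra spectral condition. The difference is in how you establish $H^{3/2}$-regularity and the boundary trace of the correction term, and that is where your argument is not airtight. You invoke the Konno--Kuroda formula \eqref{2.20}, whose second summand has the factor $\big(H_{0,\Om}^D-zI_\Om\big)^{-1}v$ sitting on the \emph{outside}; since $v\in L^{2p}$ is only a rough multiplier, this factor naturally lands in $H^1(\Om)$ rather than $H^{3/2}(\Om)$ (via $\big(H_{0,\Om}^D-zI_\Om\big)^{-1/2}\cdot\ol{\big(H_{0,\Om}^D-zI_\Om\big)^{-1/2}v}$), and showing it has vanishing Dirichlet trace requires additional $L^r$-theory that you only gesture at. The paper instead writes the correction through the unsymmetrized second resolvent identity,
\begin{equation*}
\big(H_\Om^D-zI_\Om\big)^{-1}Vu_0^D
= \big(H_{0,\Om}^D-zI_\Om\big)^{-1}\big[I+V\big(H_{0,\Om}^D-zI_\Om\big)^{-1}\big]^{-1}Vu_0^D,
\end{equation*}
observing (using $V\in\LOm$, Lemma \ref{l2.3}, and the Hilbert--Schmidt property of $V\big(H_{0,\Om}^D-zI_\Om\big)^{-1}$) that the bracketed inverse is bounded on $\LOm$, recorded as \eqref{3.15}--\eqref{3.16}. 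This factorization lands the correction directly in $\dom\big(H_{0,\Om}^D\big)\subset H^2(\Om)\subset H^{3/2}(\Om)$ in one step, simultaneously giving the $H^{3/2}$-regularity and the vanishing Dirichlet trace, with no appeal to $L^r$-resolvent theory. If you adopt this factorization, your steps (i) and (ii) close up immediately.

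Conversely, you correctly identify a point the paper glosses over: the uniqueness assertion ``follows from $z\notin\si\big(H_\Om^D\big)$'' implicitly requires identifying the ``soft'' solution space $\{w\in H^{3/2}(\Om)\,|\,(-\De+V-z)w=0,\ \ga_D w=0\}$ with $\ker\big(H_\Om^D-zI_\Om\big)$, and $\dom\big(H_\Om^D\big)$ is defined via the abstract Kato--Konno--Kuroda construction, not as a classical PDE realization. The paper does not elaborate on this reconciliation; your flagging of it as the ``principal obstacle'' is a fair reading, though it is standard that the abstract operator acts as $-\De+V$ in the distributional sense, and once the trace vanishes, bootstrapping via the same factorization shows $w\in\dom\big(H_{0,\Om}^D\big)\cap\dom(V)\subset\dom\big(H_\Om^D\big)$.
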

\begin{proof}
We temporarily assume that
$z\in\bbC\big\backslash\big(\si\big(H_{0,\Om}^D\big)\cup\si\big(H_{\Om}^D\big)\big)$ in the case of the Dirichlet problem and
$z\in\bbC\big\backslash\big(\si\big(H_{0,\Om}^N\big)\cup\si\big(H_{\Om}^N\big)\big)$ in the context of the Neumann problem. \\
Uniqueness follows from the fact that $z\notin\si(H_\Om^D)$
and $z\notin\si(H_\Om^N)$, respectively.

Next, we will show that the functions
\begin{align}
u^D (z) &= u_0^D (z) - \big(H_\Om^D-zI_\Om\big)^{-1} V u_0^D (z), \lb{3.13}
\\
u^N (z)&= u_0^N (z) - \big(H_\Om^N-zI_\Om\big)^{-1} V u_0^N (z),  \lb{3.14}
\end{align}
with $u_0^D, u_0^N$ given by Theorem \ref{t3.1},
satisfy \eqref{3.11} and \eqref{3.12}, respectively. Indeed, it
follows from Theorem \ref{t3.1} that $u_0^D,u_0^N\in H^{3/2}(\Om)$
and $\wti\ga_N u_0^D \in \LdOm$. Using the Sobolev embedding theorem
$H^{3/2}(\Om) \hookrightarrow L^q(\Om;d^nx)$, $q\geq2$, and the
fact that $V\in L^p(\Om;d^nx)$, $p>2$, one concludes that $Vu_0^D,
Vu_0^N\in\LOm$, and hence \eqref{3.13} and \eqref{3.14} are 
well-defined. Since one also has $V\in\LOm$, it follows from Lemma
\ref{l2.3} that $V\big(H_{0,\Om}^D-zI_\Om\big)^{-1}$ and
$V\big(H_{0,\Om}^N-zI_\Om\big)^{-1}$ are Hilbert--Schmidt, and hence
\begin{align}
\big[I+V\big(H_{0,\Om}^D-zI_\Om\big)^{-1}\big]^{-1} \in \cB\big(\LOm\big), \quad
z\in\bbC\big\backslash\big(\si\big(H_{0,\Om}^D\big)\cup\si\big(H_{\Om}^D\big)\big),
\lb{3.15}
\\
\big[I+V\big(H_{0,\Om}^N-zI_\Om\big)^{-1}\big]^{-1} \in \cB\big(\LOm\big), \quad
z\in\bbC\big\backslash\big(\si\big(H_{0,\Om}^N\big)\cup\si\big(H_{\Om}^N\big)\big). 
\lb{3.16}
\end{align}
Thus, by \eqref{2.4} and \eqref{2.5},
\begin{align}
\big(H_\Om^D-zI_\Om\big)^{-1} V u_0^D &=
\big(H_{0,\Om}^D-zI_\Om\big)^{-1}\big[I+V\big(H_{0,\Om}^D-zI_\Om\big)^{-1}\big]^{-1}Vu_0^D
\in H^2(\Om),
\\
\big(H_\Om^N-zI_\Om\big)^{-1} V u_0^N &=
\big(H_{0,\Om}^N-zI_\Om\big)^{-1}\big[I+V\big(H_{0,\Om}^N-zI_\Om\big)^{-1}\big]^{-1}Vu_0^N
\in H^2(\Om),
\end{align}
and hence $u^D,u^N\in H^{3/2}(\Om)$ and $\wti\ga_N u^D \in \LdOm$.
Moreover,
\begin{align}
(-\Delta+V-z)u^D &= (-\Delta-z)u_0^D + Vu_0^D -
(-\Delta+V-z)\big(H_{\Om}^D-zI_\Om\big)^{-1}Vu_0^D \no
\\ &=
Vu_0^D - I_\Om Vu_0^D = 0,
\\
(-\Delta+V-z)u^N &= (-\Delta-z)u_0^N + Vu_0^N -
(-\Delta+V-z)\big(H_{\Om}^N-zI_\Om\big)^{-1}Vu_0^N \no
\\ &=
Vu_0^N - I_\Om Vu_0^N = 0,
\end{align}
and by \eqref{2.4}, \eqref{2.5} and \eqref{3.15}, \eqref{3.16} one
also obtains,
\begin{align}
\ga_D u^D &= \ga_D u_0^D - \ga_D\big(H_{\Om}^D-zI_\Om\big)^{-1}Vu_0^D \no
\\&=
f - \ga_D \big(H_{0,\Om}^D-zI_\Om\big)^{-1}
\big[I+V\big(H_{0,\Om}^D-zI_\Om\big)^{-1}\big]^{-1} Vu_0^D=f,
\\
\wti\ga_N u^N &= \wti\ga_N u_0^N -
\wti\ga_N\big(H_{\Om}^N-zI_\Om\big)^{-1}Vu_0^N \no
\\&=
g - \ga_N \big(H_{0,\Om}^N-zI_\Om\big)^{-1}
\big[I+V\big(H_{0,\Om}^N-zI_\Om\big)^{-1}\big]^{-1} Vu_0^N=g.
\end{align}

Finally, \eqref{3.11} and \eqref{3.12} follow from \eqref{3.3},
\eqref{3.4}, \eqref{3.13}, \eqref{3.14}, and the resolvent
identity,
\begin{align}
u^D (z) &= \big[I_\Om - \big(H_\Om^D-zI_\Om\big)^{-1} V\big] \big[-\ga_N
\big(\big(H_{0,\Om}^D-zI_\Om\big)^{-1}\big)^*\big]^*f \no
\\ &=
-\big[\ga_N \big(\big(H_{0,\Om}^D- zI_\Om\big)^{-1}\big)^*
\big[I_\Om - \big(H_\Om^D-zI_\Om\big)^{-1} V\big]^*\big]^*f \no
\\ &=
-\big[\ga_N \big(\big(H_\Om^D-zI_\Om\big)^{-1}\big)^*\big]^*f,
\\[1mm]
u^N (z) &= \big[I_\Om - \big(H_\Om^N-zI_\Om\big)^{-1} V\big] \big[\ga_D
\big(\big(H_{0,\Om}^N-zI_\Om\big)^{-1}\big)^*\big]^*g \no
\\ &=
\big[\ga_D \big(\big(H_{0,\Om}^N-zI_\Om\big)^{-1}\big)^*
\big[I_\Om - \big(H_\Om^N-zI_\Om\big)^{-1} V\big]^*\big]^*g \no
\\ &=
\big[\ga_D \big(\big(H_\Om^N-zI_\Om\big)^{-1}\big)^*\big]^*g.
\end{align}
Analytic continuation with respect to $z$ then permits one to remove the additional condition $z \notin \si\big(H_{0,\Om}^D\big)$ in the case of the Dirichlet problem, and
the additional condition $z \notin \si\big(H_{0,\Om}^N\big)$ in the context of the Neumann problem.
\end{proof}

Assuming Hypothesis \ref{h3.2}, we now introduce the
Dirichlet-to-Neumann maps, $M_{0,\Om}^{D}(z)$ and $M_\Om^{D}(z)$,
associated with $(-\Delta-z)$ and $(-\Delta+V-z)$ on $\Om$, as follows,
\begin{align}
M_{0,\Om}^{D}(z) \colon
\begin{cases}
H^1(\dOm) \to \LdOm,
\\
\hspace*{10mm} f \mapsto -\wti\ga_N u_0^D,
\end{cases}  \quad z\in\bbC\big\backslash\si\big(H_{0,\Om}^D\big),
\end{align}
where $u_0^D$ is the unique solution of
\begin{align}
(-\Delta-z)u_0^D = 0 \,\text{ on }\Om, \quad u_0^D\in
H^{3/2}(\Om), \quad \ga_D u_0^D = f \,\text{ on }\dOm,
\end{align}
and
\begin{align}
M_\Om^{D}(z) \colon
\begin{cases}
H^1(\dOm) \to \LdOm,
\\
\hspace*{10mm} f \mapsto -\wti\ga_N u^D,
\end{cases}  \quad z\in\bbC\big\backslash\si\big(H_{\Om}^D\big),
\end{align}
where $u^D$ is the unique solution of
\begin{align}
(-\Delta+V-z)u^D = 0 \,\text{ on }\Om,  \quad u^D \in
H^{3/2}(\Om), \quad \ga_D u^D= f \,\text{ on }\dOm.
\end{align}

In addition, still assuming Hypothesis \ref{h3.2}, we introduce the Neumann-to-Dirichlet maps, $M_{0,\Om}^{N}(z)$ and $M_\Om^{N}(z)$,
associated with $(-\Delta-z)$ and $(-\Delta+V-z)$ on $\Om$, as
follows,
\begin{align}
M_{0,\Om}^{N}(z) \colon \begin{cases} \LdOm \to H^1(\dOm),
\\
\hspace*{20.5mm} g \mapsto \ga_D u_0^N, \end{cases}  \quad
z\in\bbC\big\backslash\si\big(H_{0,\Om}^N\big),
\end{align}
where $u_0^N$ is the unique solution of
\begin{align}
(-\Delta-z)u_0^N = 0 \,\text{ on }\Om, \quad u_0^N\in
H^{3/2}(\Om), \quad \wti\ga_N u_0^N = g \,\text{ on }\dOm,
\end{align}
and
\begin{align}
M_\Om^{N}(z) \colon \begin{cases}
\LdOm \to H^1(\dOm),
\\
\hspace*{20.5mm} g \mapsto \ga_D u^N,
\end{cases}  \quad
z\in\bbC\big\backslash\si\big(H_{\Om}^N\big),
\end{align}
where $u^N$ is the unique solution of
\begin{align}
(-\Delta+V-z)u^N = 0 \,\text{ on }\Om,  \quad u^N \in
H^{3/2}(\Om), \quad \wti\ga_N u^N= g \,\text{ on }\dOm.
\end{align}

It follows from Theorems \ref{t3.1} and \ref{t3.3}, that under the
assumption of Hypothesis \ref{h3.2}, the operators
$M_{0,\Om}^D(z)$, $M_{\Om}^D(z)$, $M_{0,\Om}^N(z)$, and
$M_{\Om}^N(z)$ are well-defined and satisfy the following
equalities,
\begin{align}
M_{0,\Om}^{N}(z) &= - M_{0,\Om}^{D}(z)^{-1}, \quad
z\in\bbC\big\backslash\big(\si\big(H_{0,\Om}^D\big)\cup\si\big(H_{0,\Om}^N\big)\big),
\lb{3.28}
\\
M_{\Om}^{N}(z) &= - M_{\Om}^{D}(z)^{-1}, \quad
z\in\bbC\big\backslash\big(\si\big(H_{\Om}^D\big)\cup\si\big(H_{\Om}^N\big)\big),
\lb{3.29}
\intertext{and}%
M^{D}_{0,\Om}(z) &= \wti\gamma_N\big[\gamma_N
\big(\big(H^D_{0,\Om} - zI_\Om\big)^{-1}\big)^*\big]^*, \quad
z\in\bbC\big\backslash\si\big(H_{0,\Om}^D\big), \lb{3.30}
\\
M^{D}_{\Om}(z) &= \wti\gamma_N\big[\gamma_N \big(\big(H^D_{\Om} -
zI_\Om\big)^{-1}\big)^*\big]^*, \quad
z\in\bbC\big\backslash\si\big(H_{\Om}^D\big),
\lb{3.31}
\\
M^{N}_{0,\Om}(z) &= \gamma_D\big[\gamma_D
\big(\big(H^N_{0,\Om} - zI_\Om\big)^{-1}
\big)^*\big]^*, \quad
z\in\bbC\big\backslash\si\big(H_{0,\Om}^N\big), \lb{3.32}
\\
M^{N}_{\Om}(z) &= \gamma_D\big[\gamma_D
\big(\big(H^N_{\Om} - zI_\Om\big)^{-1}\big)^*\big]^*, \quad
z\in\bbC\big\backslash\si\big(H_{\Om}^N\big).
\lb{3.33}
\end{align}

The representations \eqref{3.30}--\eqref{3.33} provide a convenient point of departure for proving the operator-valued Herglotz property of
$M^{D}_{\Om}$ and $M^{N}_{\Om}$. We will return to this topic in a future paper.

Next, we note that the above formulas \eqref{3.30}--\eqref{3.33} may
be used as alternative definitions of the
Dirichlet-to-Neumann and Neumann-to-Dirichlet maps. In
particular, we will next use \eqref{3.31} and \eqref{3.33} to extend
the above definition of the operators $M^{D}_{\Om}(z)$ and
$M^{N}_{\Om}(z)$ to the more general situation governed by Hypothesis \ref{h2.6}:

\begin{lemma} [\cite{GMZ06}] \lb{l3.4}
Assume Hypothesis \ref{h2.6}. Then the operators
$M^{D}_{\Om}(z)$ and $M^{N}_{\Om}(z)$ defined by equalities
\eqref{3.31} and \eqref{3.33} have the following boundedness 
properties,
\begin{align}
& M^{D}_{\Om}(z) \in \cB\big(H^{1}(\partial\Om),\LdOm\big), \quad
z\in\bbC\big\backslash\si\big(H_{\Om}^D\big),
\lb{3.42a} \\
& M^{N}_{\Om}(z) \in \cB\big(\LdOm,H^{1}(\partial\Om)\big), \quad
z\in\bbC\big\backslash\si\big(H_{\Om}^N\big).
\lb{3.43a}
\end{align}
\end{lemma}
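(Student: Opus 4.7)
The plan is to adopt a perturbative strategy, reducing the claimed boundedness of $M^D_\Om(z)$ and $M^N_\Om(z)$ to that of the unperturbed Dirichlet-to-Neumann and Neumann-to-Dirichlet maps $M^D_{0,\Om}(z)$, $M^N_{0,\Om}(z)$, whose mapping properties are already encoded in Theorem \ref{t3.1} through \eqref{3.4b} and \eqref{3.4c}. The bridge between the free and perturbed operators is the second resolvent identity \eqref{2.20}, \eqref{2.21}.

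For the Dirichlet assertion I would start from \eqref{2.20} in the equivalent form
\begin{equation*}
\big(H^D_\Om-zI_\Om\big)^{-1} = \big(H^D_{0,\Om}-zI_\Om\big)^{-1} - \big(H^D_\Om-zI_\Om\big)^{-1}V\big(H^D_{0,\Om}-zI_\Om\big)^{-1},
\end{equation*}
pass to adjoints, apply $\ga_N$ from the left, and dualize once more to obtain
\begin{equation*}
\bigl[\ga_N\bigl((H^D_\Om-zI_\Om)^{-1}\bigr)^*\bigr]^* = \bigl[\ga_N\bigl((H^D_{0,\Om}-zI_\Om)^{-1}\bigr)^*\bigr]^* - \big(H^D_\Om-zI_\Om\big)^{-1}V\bigl[\ga_N\bigl((H^D_{0,\Om}-zI_\Om)^{-1}\bigr)^*\bigr]^*.
\end{equation*}
Composing with $\wti\ga_N$ from the left and using the definitions \eqref{3.30}, \eqref{3.31} then produces the decomposition $M^D_\Om(z) = M^D_{0,\Om}(z) - K^D(z)$, where the leading summand already lies in $\cB(H^1(\dOm),\LdOm)$ by Theorem \ref{t3.1}.

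The core task is to show that $K^D(z) := \wti\ga_N(H^D_\Om-zI_\Om)^{-1}V\bigl[\ga_N\bigl((H^D_{0,\Om}-zI_\Om)^{-1}\bigr)^*\bigr]^*$ lies in $\cB(H^1(\dOm),\LdOm)$. Its innermost factor sends $H^1(\dOm)$ into $H^{3/2}(\Om)$ by \eqref{3.4b}. To make sense of the middle composition $(H^D_\Om-zI_\Om)^{-1}V$, I would factor $V=uv$ with $u,v\in L^{2p}(\Om)$ and re-expand $(H^D_\Om-zI_\Om)^{-1}$ via \eqref{2.20}, so that every occurrence of $u$ or $v$ is absorbed into an adjacent resolvent half-power, producing operators in $\cB_{2p}(\LOm)$ by \eqref{2.31}; boundedness of the Birman--Schwinger inverse $[I_\Om + \ol{u(H^D_{0,\Om}-zI_\Om)^{-1}v}]^{-1}$ on $\LOm$ for $z\in\rho(H^D_\Om)$ was noted in the paragraph following \eqref{2.21}. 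The resulting expression terminates with $\wti\ga_N$ acting on a function in a space on which this trace takes values in $\LdOm$ with the appropriate quantitative bound.

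The principal obstacle is exactly this intermediate step. Under Hypothesis \ref{h2.6}, $V$ is merely in $L^p(\Om)$ with $p\le 2$, so naive H\"older estimates on $Vg$ for $g\in H^{3/2}(\Om)$ do not in general place it in $\LOm$, and one cannot therefore apply $(H^D_\Om-zI_\Om)^{-1}$ to it without further care. The $V=uv$ splitting together with insertion of resolvent half-powers and the Schatten ideal bounds from Lemma \ref{l2.3} and Corollary \ref{c2.5} is the decisive technical device that sidesteps this difficulty. The Neumann-to-Dirichlet statement is then handled by a completely parallel decomposition based on \eqref{3.33}, \eqref{2.21}, \eqref{3.4c}, and the Schatten estimate from Corollary \ref{c2.5} applied to $\ol{\ga_D(H^N_{0,\Om}-zI_\Om)^{-1}f_1}$, landing this time in the smoother target $H^1(\dOm)$ dictated by the improved regularity in \eqref{3.4c}.
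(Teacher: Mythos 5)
The paper defers the proof of Lemma~\ref{l3.4} to \cite{GMZ06}, so there is no in-paper argument to compare against; I evaluate your sketch on its own merits. Your perturbative skeleton --- writing $M^D_\Om(z)=M^D_{0,\Om}(z)-K^D(z)$ and $M^N_\Om(z)=M^N_{0,\Om}(z)-K^N(z)$ via the second resolvent identity, which is exactly the decomposition codified in \eqref{3.35} --- is the right start. But on the Dirichlet side, feeding the symmetric split $V=uv$ with $u,v\in L^{2p}(\Om)$ into Corollary~\ref{c2.5} does not work: the Neumann-trace estimate \eqref{2.26}, \eqref{2.28} requires the multiplier to lie in $L^{p_2}$ with $p_2>2n$, and $2p\le 4\le 2n$ for $n=2,3$. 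You need the asymmetric split $V=\wti u\,\wti v$, $\wti u\in L^{p_1}$, $\wti v\in L^{p_2}$, with $p_1,p_2$ as in \eqref{4.6}, arranged so that the weaker-integrability factor $\wti v$ sits next to the more demanding Neumann trace --- the same calibration used in the proof of Theorem~\ref{t4.1}. With this adjustment, \eqref{3.42a} goes through.

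The genuine gap is in the Neumann claim \eqref{3.43a}. The tools you invoke do not reach the codomain $H^1(\dOm)$. Corollary~\ref{c2.5} gives $\ol{\ga_D(H^N_{0,\Om}-zI_\Om)^{-1}f_1}\in\cB_{p_1}\big(\LOm,\LdOm\big)$, whose image lies only in $\LdOm$; and the ``improved regularity'' in \eqref{3.4c} pertains to the \emph{inner} factor $\big[\ga_D\big((H^N_{0,\Om}-zI_\Om)^{-1}\big)^*\big]^*$ of $K^N(z)$ (it produces $H^{3/2}(\Om)$ data in the interior), not to the outer Dirichlet trace. So your decomposition yields only $K^N(z)\in\cB\big(\LdOm\big)$, and then $M^N_\Om(z)=M^N_{0,\Om}(z)-K^N(z)$ fails to land in $H^1(\dOm)$. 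To obtain $H^1(\dOm)$ data from the trace of $(H^N_\Om-zI_\Om)^{-1}Vg$, you would have to place that function in $H^s(\Om)$ for some $s>3/2$ and invoke $\ga_D:H^s(\Om)\to H^1(\dOm)$, but the paper's trace theorem \eqref{2.2} stops at $s=3/2$, where the image is only $H^{1-\eps}(\dOm)$; the needed estimate for $s>3/2$ on Lipschitz domains is not part of the stated toolbox. A natural way around this is to abandon the additive decomposition for the Neumann map and use \eqref{3.28} together with the multiplicative identity \eqref{3.36}: once $I_\dOm-K$ (with $K$ the operator in \eqref{3.36}) is invertible in $\cB\big(\LdOm\big)$, one has $M^N_\Om(z)=M^N_{0,\Om}(z)\,(I_\dOm-K)^{-1}$, and the codomain $H^1(\dOm)$ is then inherited entirely from the free map $M^N_{0,\Om}(z)$ via Theorem~\ref{t3.1}. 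Your proposal does not contain this step, and without it, or an extra trace estimate, the $H^1(\dOm)$ target in \eqref{3.43a} is out of reach.
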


A detailed proof of Lemma \ref{l3.4} will be provided in \cite{GMZ06}.

Weyl--Titchmarsh operators, in a spirit close to ours, have recently been discussed by Amrein and Pearson \cite{AP04} in connection with the interior and exterior of a ball in $\bbR^3$ and real-valued potentials $V\in L^\infty(\bbR^3;d^3x)$. For additional literature on Weyl--Titchmarsh operators, relevant in the context of boundary value spaces (boundary triples, etc.), we refer, for instance, to \cite{ABMN05}, \cite{BL06}, 
\cite{BMN06}, 
\cite{BMN00}, \cite{BMN02}, \cite{BM04}, \cite{DM91}, \cite{DM95}, \cite{GKMT01}, 
\cite[Ch.\ 3]{GG91}, \cite{MM06}, \cite{Ma04}, \cite{MPP07}, \cite{Pa87}, \cite{Pa02}. 

Next, we prove the following auxiliary result, which will play a crucial role in Theorem \ref{t4.2}, the principal result of this paper.

\begin{lemma} \lb{l3.5}
Assume Hypothesis \ref{h2.6}. Then the following identities hold,
\begin{align}
M_{0,\Om}^D(z) - M_\Om^D(z) &= \ol{\wti\gamma_N
\big(H^D_{\Om}-zI_\Om\big)^{-1} V \big[\gamma_N
\big(\big(H^D_{0,\Om}-zI_\Om\big)^{-1}\big)^*\big]^*}, \no
\\
&\hspace*{3.1cm}
z\in\bbC\big\backslash\big(\si\big(H_{0,\Om}^D\big)\cup\si\big(H_{\Om}^D\big)\big),
\lb{3.35}
\\
M_\Om^D(z) M_{0,\Om}^D(z)^{-1} &= I_\dOm - \ol{\wti\gamma_N
\big(H^D_{\Om}-zI_\Om\big)^{-1} V \big[\gamma_D
\big(\big(H^N_{0,\Om}-zI_\Om\big)^{-1}\big)^*\big]^*}, \no
\\
&\hspace*{2.45cm}
z\in\bbC\big\backslash\big(\si\big(H_{0,\Om}^D\big)\cup\si\big(H_{\Om}^D\big)
\cup\si\big(H_{0,\Om}^N\big)\big). \lb{3.36}
\end{align}
\end{lemma}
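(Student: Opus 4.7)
The plan is to establish \eqref{3.35} by combining the perturbative Dirichlet solution formula from the proof of Theorem \ref{t3.3} with the Poisson-type representation \eqref{3.3}, and then to derive \eqref{3.36} from \eqref{3.35} by right multiplication with $M_{0,\Om}^D(z)^{-1}$. The algebraic core of the second step will be the operator identity
\[
\bigl[\ga_N \bigl(\bigl(H^D_{0,\Om}-zI_\Om\bigr)^{-1}\bigr)^*\bigr]^*\, M_{0,\Om}^D(z)^{-1} = \bigl[\ga_D \bigl(\bigl(H^N_{0,\Om}-zI_\Om\bigr)^{-1}\bigr)^*\bigr]^*,
\]
which expresses the fact that the Dirichlet and Neumann Poisson operators produce the same Helmholtz solution when the boundary data are related through the free Dirichlet-to-Neumann map.

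For \eqref{3.35} I would fix $f \in H^1(\dOm)$ and let $u_0^D, u^D \in H^{3/2}(\Om)$ denote the unique Helmholtz and Schr\"odinger Dirichlet solutions with data $f$, as supplied by Theorems \ref{t3.1} and \ref{t3.3}. Identity \eqref{3.13} from the proof of Theorem \ref{t3.3} gives $u^D - u_0^D = -\bigl(H_{\Om}^D-zI_\Om\bigr)^{-1} V u_0^D$, while \eqref{3.3} yields $u_0^D = -\bigl[\ga_N\bigl(\bigl(H^D_{0,\Om}-zI_\Om\bigr)^{-1}\bigr)^*\bigr]^* f$. Applying $-\wti\ga_N$ to the first relation and using $-\wti\ga_N u_0^D = M^D_{0,\Om}(z) f$, $-\wti\ga_N u^D = M^D_\Om(z) f$ from the definitions of the Dirichlet-to-Neumann maps, followed by substitution of the representation of $u_0^D$, produces \eqref{3.35} on the dense subspace $H^1(\dOm) \subset \LdOm$; the closure bar then records the continuous extension to $\LdOm$.

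For \eqref{3.36} the key observation will be that, for any $g \in \LdOm$, the function $u_0 := \bigl[\ga_D \bigl(\bigl(H^N_{0,\Om}-zI_\Om\bigr)^{-1}\bigr)^*\bigr]^* g$, which by \eqref{3.4} is the unique Neumann Helmholtz solution with $\wti\ga_N u_0 = g$, is simultaneously the unique Dirichlet Helmholtz solution with data $f := \ga_D u_0 \in H^1(\dOm)$. On the one hand, the definition of $M^N_{0,\Om}(z)$ gives $f = M^N_{0,\Om}(z) g = -M^D_{0,\Om}(z)^{-1} g$ by \eqref{3.28}; on the other hand, \eqref{3.3} applied to $u_0$ gives $u_0 = -\bigl[\ga_N\bigl(\bigl(H^D_{0,\Om}-zI_\Om\bigr)^{-1}\bigr)^*\bigr]^* f$. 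Equating the two expressions for $u_0$ yields the displayed operator identity, and inserting it after right-multiplying \eqref{3.35} by $M_{0,\Om}^D(z)^{-1}$ delivers \eqref{3.36}.

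The principal technical obstacle will be justifying the closure bars on the right-hand sides of \eqref{3.35}--\eqref{3.36}, that is, showing that the compositions extend from their natural domains to bounded operators on $\LdOm$. This will rest on the trace-resolvent mapping properties of Lemma \ref{l2.4} and Corollary \ref{c2.5}, together with the $L^p$-size of $V$ from Hypothesis \ref{h2.6}, the boundedness statements \eqref{3.4b}--\eqref{3.4c} for the Poisson operators, and Lemma \ref{l3.4} for the extended Dirichlet-to-Neumann map $M^D_\Om$. A secondary subtlety is that Theorem \ref{t3.3} is available only under the stronger Hypothesis \ref{h3.2}; the cleanest remedy will be to first establish \eqref{3.35}--\eqref{3.36} under Hypothesis \ref{h3.2} and then pass to the full range of potentials in Hypothesis \ref{h2.6} by approximating $V$ in $L^p(\Om;d^n x)$ by functions $V_k \in L^\infty(\Om) \cap L^p(\Om)$, exploiting norm-resolvent continuity of $H^D_\Om$ and the uniform estimates from Section \ref{s2} to take the limit in $\cB(\LdOm)$.
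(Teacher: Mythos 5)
Your proof is correct and takes essentially the same route as the paper: equation \eqref{3.35} comes from the second resolvent identity (you express it at the solution level via \eqref{3.13}, the paper at the operator level via \eqref{3.30}--\eqref{3.31}, which is the same computation), and for \eqref{3.36} your displayed operator identity is precisely the paper's \eqref{3.44}, derived in the same way by feeding a Neumann--Helmholtz solution back into the Dirichlet representation of Theorem \ref{t3.1}. Your explicit remark about bridging Hypothesis \ref{h3.2} to Hypothesis \ref{h2.6} by approximation is a fair point that the paper leaves implicit (relying on Lemma \ref{l3.4} for the extended definition of $M^D_\Om$), but it does not change the substance of the argument.
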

\begin{proof}
Let $z\in\bbC\big\backslash\big(\si\big(H_{0,\Om}^D\big)\cup\si\big(H_{\Om}^D\big)\big)$.
Then \eqref{3.35} follows from \eqref{3.30}, \eqref{3.31}, and
the resolvent identity
\begin{align}
M_{0,\Om}^D(z) - M_\Om^D(z) &=
\wti\gamma_N\big[\gamma_N\big(\big(H^D_{0,\Om}-zI_\Om\big)^{-1} -
\big(H^D_{\Om}-zI_\Om\big)^{-1}\big)^*\big]^* \no
\\ &=
\ol{\wti\gamma_N\big[\gamma_N \big(
\big(H^D_{\Om}-zI_\Om\big)^{-1}V\big(H^D_{0,\Om}-zI_\Om\big)^{-1} \big)^*\big]^*}
\\ &=
\ol{\wti\gamma_N \big(H^D_{\Om}-zI_\Om\big)^{-1} V \big[\gamma_N
\big(\big(H^D_{0,\Om}-zI_\Om\big)^{-1}\big)^*\big]^*}. \no
\end{align}
Next, if 
$z\in\bbC\big\backslash\big(\si\big(H_{0,\Om}^D\big)\cup\si\big(H_{\Om}^D\big)
\cup\si\big(H_{0,\Om}^N\big)\big)$, then it follows from \eqref{3.28},
\eqref{3.32}, and \eqref{3.35} that
\begin{align} \lb{3.40}
M_\Om^D(z) M_{0,\Om}^D(z)^{-1} &= I_\dOm + \big(M_\Om^D(z) -
M_{0,\Om}^D(z)\big)M_{0,\Om}^D(z)^{-1} \no
\\ &=
I_\dOm + \big(M_{0,\Om}^D(z) - M_{\Om}^D(z)\big)M_{0,\Om}^N(z)
\no
\\ &=
I_\dOm + \ol{\wti\gamma_N \big(H^D_{\Om}-zI_\Om\big)^{-1} V \big[\gamma_N
\big(\big(H^D_{0,\Om}-zI_\Om\big)^{-1}\big)^*\big]^*}
\\ &\quad \times
\gamma_D\big[\gamma_D
\big(\big(H^N_{0,\Om}-zI_\Om\big)^{-1}\big)^*\big]^*. \no
\end{align}
Let $g\in\LdOm$. Then by Theorem \ref{t3.1},
\begin{align}
u=\big[\gamma_D\big(\big(H^N_{0,\Om}-zI_\Om\big)^{-1}\big)^*\big]^*g
\lb{3.41}
\end{align}
is the unique solution of
\begin{align}
(-\Delta-z)u = 0 \,\text{ on }\Om, \quad u\in H^{3/2}(\Om),
\quad \wti\ga_N u = g \,\text{ on }\dOm.
\end{align}
Setting $f=\ga_D u \in H^1(\dOm)$ and utilizing Theorem
\ref{t3.1} once again, one obtains 
\begin{align}
u &= -\big[\ga_N \big(H_{0,\Om}^D-\ol{z}I_\Om\big)^{-1}\big]^*f \no
\\ &=
-\big[\gamma_N \big(\big(H^D_{0,\Om}-zI_\Om\big)^{-1}\big)^*\big]^*
\gamma_D\big[\gamma_D
\big(\big(H^N_{0,\Om}-zI_\Om\big)^{-1}\big)^*\big]^*g. \lb{3.43}
\end{align}
Thus, it follows from \eqref{3.41} and \eqref{3.43} that
\begin{align}
\big[\gamma_N \big(\big(H^D_{0,\Om}-zI_\Om\big)^{-1}\big)^*\big]^*
\gamma_D\big[\gamma_D
\big(\big(H^N_{0,\Om}-zI_\Om\big)^{-1}\big)^*\big]^* =
-\big[\gamma_D\big(\big(H^N_{0,\Om}-zI_\Om\big)^{-1}\big)^*\big]^*.
\lb{3.44}
\end{align}
Finally, insertion of \eqref{3.44} into \eqref{3.40} yields
\eqref{3.36}.
\end{proof}

It follows from \eqref{4.24}--\eqref{4.29} that $\widetilde \gamma_N$ can be replaced by $\gamma_N$ on the right-hand side of \eqref{3.35} and \eqref{3.36}.

\section{A Multi-Dimensional Variant of a Formula due to Jost and Pais}
\label{s4}

In this section we prove our multi-dimensional variants of the Jost and Pais formula as discussed in the introduction.

We start with an elementary comment on determinants which, however, lies at the heart of the matter of our multi-dimensional variant of the one-dimensional Jost and Pais result. Suppose $A \in \cB(\cH_1, \cH_2)$, $B \in \cB(\cH_2, \cH_1)$ with $A B \in \cB_1(\cH_2)$ and $B A \in \cB_1(\cH_1)$. Then,
\begin{equation}
\det (I_{\cH_2}-AB) = \det (I_{\cH_1}-BA).   \lb{4.0}
\end{equation}
In particular, $\cH_1$ and $\cH_2$ may have different dimensions. Especially, one of them may be infinite and the other finite, in which case one of the two determinants in \eqref{4.0} reduces to a finite determinant. This case indeed occurs in the original one-dimensional case studied by Jost and Pais \cite{JP51} as described in detail in 
\cite{GM03} and the references therein. In the proof of the next theorem, the role of 
$\cH_1$ and $\cH_2$ will be played by $L^2(\Om; d^n x)$ and 
$L^2(\partial\Om; d^{n-1} \sigma)$, respectively.

We start with an extension of a result in \cite{GLMZ05}:

\begin{theorem} \lb{t4.1}
Assume Hypothesis \ref{h2.6} and let
$z\in\bbC\big\backslash\big(\si\big(H_{\Om}^D\big)\cup \si\big(H_{0,\Om}^D\big) \cup
\si\big(H_{0,\Om}^N\big)\big)$. Then,
\begin{align}
\ol{\ga_N\big(H_{0,\Om}^D-zI_{\Om}\big)^{-1}V \big(H_{\Om}^D-zI_{\Om}\big)^{-1}V
\big[\ga_D \big(H_{0,\Om}^N-\ol{z}I_{\Om}\big)^{-1}\big]^*}
&\in\cB_1\big(\LdOm\big),  \lb{4.1}  \\
\ol{\ga_N\big(H_{\Om}^D-zI_{\Om}\big)^{-1}V
\big[\ga_D \big(H_{0,\Om}^N-\ol{z}I_{\Om}\big)^{-1}\big]^*}
&\in\cB_2\big(\LdOm\big), \lb{4.2}
\end{align}
and
\begin{align}
&
\frac{\det{}_2\Big(I_{\Om}+\ol{u\big(H_{0,\Om}^N-zI_{\Om}\big)^{-1}v}\,\Big)}
{\det{}_2\Big(I_{\Om}+\ol{u\big(H_{0,\Om}^D-zI_{\Om}\big)^{-1}v}\,\Big)}  \no \\
&\quad = \det{}_2\Big(I_{\dOm} -
\ol{\ga_N\big(H_{\Om}^D-zI_{\Om}\big)^{-1}V
\big[\ga_D \big(H_{0,\Om}^N-\ol{z}I_{\Om}\big)^{-1}\big]^*} \, \Big)
\lb{4.3}\\
&\quad\quad \times \exp\Big(\tr\Big(\, 
\ol{\ga_N\big(H_{0,\Om}^D-zI_{\Om}\big)^{-1}V \big(H_{\Om}^D-zI_{\Om}\big)^{-1}V
\big[\ga_D \big(H_{0,\Om}^N-\ol{z}I_{\Om}\big)^{-1}\big]^*} \,\Big)\Big). \no
\end{align}
\end{theorem}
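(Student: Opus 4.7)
My plan is to carry the argument out in three stages, extending to modified Fredholm determinants the one-dimensional strategy of \cite{GM03}.

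\emph{Stage 1 (Schatten memberships).} To prove \eqref{4.1} and \eqref{4.2} I would factor $V=uv$ and invoke \eqref{2.20} to expand each occurrence of $(H^D_\Omega-zI_\Omega)^{-1}$ in terms of $(H^D_{0,\Omega}-zI_\Omega)^{-1}$ plus a correction built from $\ol{u(H^D_{0,\Omega}-zI_\Omega)^{-1}v}\in\cB_{p}$. Each resulting piece then falls in the right Schatten class by Lemma \ref{l2.3}, Lemma \ref{l2.4}, or Corollary \ref{c2.5}, delivering the Hilbert--Schmidt statement \eqref{4.2} and---after absorbing the middle factor $V(H^D_\Omega-zI_\Omega)^{-1}V$ as a second Hilbert--Schmidt piece---the trace-class statement \eqref{4.1}.

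\emph{Stage 2 (multiplicative reduction on $L^2(\Omega)$).} The central algebraic tool is
\begin{equation*}
\det{}_2\!\big((I+X)(I+Y)\big) = \det{}_2(I+X)\,\det{}_2(I+Y)\,e^{-\tr(XY)},\qquad X,Y\in\cB_2,\ XY\in\cB_1.
\end{equation*}
Setting $A(z)=\ol{u(H^D_{0,\Omega}-zI_\Omega)^{-1}v}$ and verifying directly from \eqref{2.20} that $(I_\Omega+A(z))^{-1}=I_\Omega-\ol{u(H^D_\Omega-zI_\Omega)^{-1}v}$, I would factor
\begin{equation*}
I_\Omega+\ol{u(H^N_{0,\Omega}-zI_\Omega)^{-1}v} = (I_\Omega+A(z))\,(I_\Omega+C(z))
\end{equation*}
with
\begin{equation*}
C(z) = \big(I_\Omega-\ol{u(H^D_\Omega-zI_\Omega)^{-1}v}\big)\,\ol{u\big[(H^N_{0,\Omega}-zI_\Omega)^{-1}-(H^D_{0,\Omega}-zI_\Omega)^{-1}\big]v}.
\end{equation*}
Dividing by $\det_2(I_\Omega+A(z))$ recasts the left-hand side of \eqref{4.3} as $\det_2(I_\Omega+C(z))\,e^{-\tr(A(z)C(z))}$.

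\emph{Stage 3 (transfer to the boundary).} The Krein-type identity
\begin{equation*}
(H^N_{0,\Omega}-zI_\Omega)^{-1}-(H^D_{0,\Omega}-zI_\Omega)^{-1} = -\big[\gamma_D(H^N_{0,\Omega}-\bar zI_\Omega)^{-1}\big]^{*}\gamma_N(H^D_{0,\Omega}-zI_\Omega)^{-1},
\end{equation*}
which follows from Theorem \ref{t3.1} by noting that both sides produce the unique $H^{3/2}$-solution of the homogeneous Helmholtz equation on $\Omega$ with matching Dirichlet and Neumann data, lets me write $C(z)=-X(z)Y(z)$ as a product factoring through $\LdOm$. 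A short computation using the second resolvent identity for the pair $(H^D_{0,\Omega},H^D_\Omega)$ then gives
\begin{equation*}
Y(z)X(z) = \ol{\gamma_N(H^D_\Omega-zI_\Omega)^{-1}V\big[\gamma_D(H^N_{0,\Omega}-\bar zI_\Omega)^{-1}\big]^{*}},
\end{equation*}
which is the Hilbert--Schmidt operator on $\LdOm$ from \eqref{4.2}. The cyclic property $\det_2(I_\Omega-X(z)Y(z))=\det_2(I_{\dOm}-Y(z)X(z))$, justified once Stage 1 has certified the appropriate product structure, transfers the bulk determinant to $\dOm$ and produces the first factor on the right-hand side of \eqref{4.3}. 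A parallel cyclic-invariance computation for $-\tr(A(z)C(z))$, using the same second resolvent identity and the trace-class property \eqref{4.1}, identifies this exponent with $\tr(T_2(z))$.

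\emph{Main obstacle.} The delicate point is the cyclic swap in Stage 3. Under Hypothesis \ref{h2.6} the natural building-block operators sit in $\cB_{2p}\subset\cB_2$ but typically \emph{not} in $\cB_1$, so neither the cyclic identity for $\det_2$ nor the cyclic invariance of the trace can be applied to the individual factors---both must be invoked only after the relevant \emph{products} have been shown to be Hilbert--Schmidt (for the $\det_2$-swap) or trace class (for the trace identification), as provided by \eqref{4.2} and \eqref{4.1}. The sharp boundary-trace estimates of Corollary \ref{c2.5}, together with the structural resolvent identity \eqref{2.20}, are precisely the ingredients that make this careful Schatten-class bookkeeping go through.
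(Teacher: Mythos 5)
Your overall algebraic strategy tracks the paper's quite closely: the multiplicative $\det{}_2$ identity, the Krein-type resolvent formula relating the Dirichlet and Neumann free resolvents through boundary traces (which the paper imports from \cite[Lemma~A.3, Remark~A.5]{GLMZ05} rather than re-deriving from Theorem~\ref{t3.1} as you do, but that is a valid alternative), the cyclic swap $\det{}_2(I_\Om-XY)=\det{}_2(I_\dOm-YX)$, and the final use of the resolvent identity to replace $H^D_{0,\Om}$ by $H^D_\Om$. Your verification that $(I_\Om+A(z))^{-1}=I_\Om-\ol{u(H^D_\Om-zI_\Om)^{-1}v}$ and that $YX$ collapses to the boundary operator in \eqref{4.2} is correct. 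However, there is a genuine gap in Stage~1, and your ``Main obstacle'' paragraph misdiagnoses where the difficulty actually lies.

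The problem is your use of the \emph{symmetric} factorization $V=uv$ with $u,v\in L^{2p}(\Om;d^nx)$. Under Hypothesis~\ref{h2.6} one has $2p\le 4$, whereas Corollary~\ref{c2.5} demands $f_2\in L^{p_2}$ with $p_2>2n$ for $\ol{\ga_N(H^D_{0,\Om}-zI_\Om)^{-1}f_2}$ to land in $\cB_{p_2}$. For $n=3$ this requires $p_2>6$, which $v\in L^{2p}$, $2p\le 4$, does not meet; indeed the factorization $\ga_N(H^D_{0,\Om}-zI_\Om)^{-1}v=\ga_N(H^D_{0,\Om}-zI_\Om)^{-(3+\eps)/4}\cdot(H^D_{0,\Om}-zI_\Om)^{-(1-\eps)/4}v$ would require $v\in L^p$ with $p>2n/(1-\eps)$ by Lemma~\ref{l2.3}, which again fails. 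So the operator $Y(z)=\ol{\ga_N(H^D_{0,\Om}-zI_\Om)^{-1}v}$ in your Stage~3 is not even guaranteed to be bounded, let alone Hilbert--Schmidt, and neither \eqref{4.1}, \eqref{4.2}, nor the cyclic swap can be justified this way. The paper circumvents this with the \emph{asymmetric} factorization $V=\ti u\,\ti v$, $\ti u=\exp(i\arg V)\abs{V}^{p/p_1}\in L^{p_1}$, $\ti v=\abs{V}^{p/p_2}\in L^{p_2}$, with $p_1,p_2$ as in \eqref{4.6} chosen precisely so that $p_2>2n$ (matching the weaker smoothing of $\ga_N$) and $p_1>2n/3$, $p_1\ge 2$ (matching the stronger smoothing of $\ga_D$), while still $1/p_1+1/p_2=1/p$. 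One must also observe the algebraic identity $v\big[I_\Om+\ol{u(H^D_{0,\Om}-zI_\Om)^{-1}v}\big]^{-1}u=\ti v\big[I_\Om+\ol{\ti u(H^D_{0,\Om}-zI_\Om)^{-1}\ti v}\big]^{-1}\ti u$ to pass from the $u,v$ expression (which is how the Krein identity naturally appears) to the $\ti u,\ti v$ expression. Finally, the paper first carries out the manipulations under the temporary extra assumption $V\in L^p\cap L^\infty$ (so that the intermediate operators built from $u,v$ \emph{are} in the needed Schatten classes) and then removes it by approximating $V$ in $L^p$ by bounded potentials, using continuity of $\det{}_2$ in $\cB_2$-norm and of $\tr$ in $\cB_1$-norm. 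Your write-up addresses neither the asymmetric re-factorization nor the approximation argument, and without them the chain of Schatten-class memberships required to legitimize Stages~1 and~3 does not close.
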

\begin{proof}
From the outset we note that the left-hand side of \eqref{4.3} is
well-defined by \eqref{2.35}. Let
$z\in\bbC\big\backslash\big(\si\big(H_{\Om}^D\big)\cup\si\big(H_{0,\Om}^D\big) \cup
\si\big(H_{0,\Om}^N\big)\big)$ and
\begin{align}
u(x) &= \exp(i\arg(V(x)))\abs{V(x)}^{1/2},\quad
v(x)=\abs{V(x)}^{1/2},
\\
\wti u(x) &= \exp(i\arg(V(x)))\abs{V(x)}^{p/p_1},\quad \wti
v(x)=\abs{V(x)}^{p/p_2},
\end{align}
where
\begin{align} \lb{4.6}
p_1=\begin{cases} 3p/2,&n=2, \\  4p/3,&n=3,\end{cases} \qquad
p_2=\begin{cases}3p,&n=2, \\ 4p,& n=3  \end{cases}
\end{align} 
with $p$ as introduced in Hypothesis \ref{h2.6}. 
Then it follows that $\f1{p_1}+\f{1}{p_2}=\f1p$, in both cases
$n=2,3$, and hence $V=uv=\wti u \wti v$.

Next, we introduce
\begin{equation}
K_D(z)=-\ol{u\big(H_{0,\Om}^D-zI_{\Om}\big)^{-1}v}, \quad
K_N(z)=-\ol{u\big(H_{0,\Om}^N-zI_{\Om}\big)^{-1}v}
\end{equation}
and note that
\begin{align}
[I_{\Om}-K_D(z)]^{-1} \in\cB\big(\LOm\big), \quad
z\in\bbC\big\backslash\big(\si\big(H_{\Om}^D\big)\cup\si\big(H_{0,\Om}^D\big)\big). 
\end{align}
Thus, utilizing the following facts,
\begin{align}
[I_{\Om} - K_D(z)]^{-1} &= I_{\Om} + K_D(z)[I_{\Om} - K_D(z)]^{-1}
\end{align}
and
\begin{align}
1& = \det{}_2\big([I_{\Om}-K_D(z)][I_{\Om} - K_D(z)]^{-1}\big)
\\
&=
\det{}_2\big(I_{\Om}-K_D(z)\big)\det{}_2\big([I_{\Om} - K_D(z)]^{-1}\big)
\exp\big(\tr\big(K_D(z)^2[I_{\Om} - K_D(z)]^{-1}\big)\big),  \no
\end{align}
one obtains
\begin{align}
&\det{}_2\big([I_{\Om}-K_N(z)][I_{\Om}-K_D(z)]^{-1}\big)  \no \\
&\quad = \det{}_2\big(I_{\Om}-K_N(z)\big)
\det{}_2\big([I_{\Om}-K_D(z)]^{-1}\big)
\exp\big(\tr\big(K_N(z)K_D(z)[I_{\Om}-K_D(z)]^{-1}\big)\big) \\
&\quad =
\frac{\det{}_2\big(I_{\Om}-K_N(z)\big)}{\det{}_2\big(I_{\Om}-K_D(z)\big)}
\exp\big(\tr\big((K_N(z)-K_D(z))K_D(z)[I_{\Om}-K_D(z)]^{-1}\big)\big).
\no
\end{align}
At this point, the left-hand side of \eqref{4.3} can be rewritten
as
\begin{align}
&\frac{\det{}_2\Big(I_{\Om}+\ol{u\big(H_{0,\Om}^N-zI_{\Om}\big)^{-1}v}\,\Big)}
{\det{}_2\Big(I_{\Om}+\ol{u\big(H_{0,\Om}^D-zI_{\Om}\big)^{-1}v}\,\Big)} =
\frac{\det{}_2\big(I_{\Om}-K_N(z)\big)}{\det{}_2\big(I_{\Om}-K_D(z)\big)}
\no \\
&\quad = \det{}_2\big([I_{\Om}-K_N(z)][I_{\Om}-K_D(z)]^{-1}\big)
\no \\
&\quad\quad \, \times
\exp\big(\tr\big((K_D(z)-K_N(z))K_D(z)[I_{\Om}-K_D(z)]^{-1}\big)\big)
\no  \\
&\quad =
\det{}_2\big(I_{\Om}+(K_D(z)-K_N(z))[I_{\Om}-K_D(z)]^{-1}\big)
\lb{4.12}
\\
&\quad\quad \, \times
\exp\big(\tr\big((K_D(z)-K_N(z))K_D(z)[I_{\Om}-K_D(z)]^{-1}\big)\big).
\no
\end{align}
Next, temporarily suppose that $V\in L^p(\Om;d^nx)\cap
L^\infty(\Om;d^nx)$. Using \cite[Lemma A.3]{GLMZ05} (an extension of a
result of Nakamura \cite[Lemma 6]{Na01}) and \cite[Remark A.5]{GLMZ05}, 
one finds
\begin{align}
\begin{split}
K_D(z)-K_N(z) &=
-\ol{u\big[\big(H_{0,\Om}^D-zI_{\Om}\big)^{-1}- \big(H_{0,\Om}^N
-zI_{\Om}\big)^{-1}\big]v}
\\ &=
-\ol{u\big[\ga_D \big(H_{0,\Om}^N-\ol{z}I_{\Om}\big)^{-1}\big]^*}\,
\ol{\ga_N \big(H_{0,\Om}^D -zI_{\Om}\big)^{-1}v}
\\ &=
-\Big[\,\ol{\ga_D \big(H_{0,\Om}^N-\ol{z}I_{\Om}\big)^{-1}\ol{u}}\,\Big]^* \,
\ol{\ga_N \big(H_{0,\Om}^D -zI_{\Om}\big)^{-1}v}.
\end{split}\lb{4.13}
\end{align}
Thus, inserting \eqref{4.13} into \eqref{4.12} yields,
\begin{align} \lb{4.14}
&\frac{\det{}_2\Big(I_{\Om}+\ol{u\big(H_{0,\Om}^N-zI_{\Om}\big)^{-1}v}\,\Big)}
{\det{}_2\Big(I_{\Om}+\ol{u\big(H_{0,\Om}^D-zI_{\Om}\big)^{-1}v}\,\Big)} \no
\\
&\quad = \det{}_2\Big(I_{\Om} - \Big[\,\ol{\ga_D
\big(H_{0,\Om}^N-\ol{z}I_{\Om}\big)^{-1}\ol{u}}\,\Big]^* \ol{\ga_N
\big(H_{0,\Om}^D-zI_{\Om}\big)^{-1}v}  \no
\Big[I_{\Om}+\ol{u\big(H_{0,\Om}^D-zI_{\Om}\big)^{-1}v}\,\Big]^{-1}\Big)
\no
\\
&\quad\quad \times \exp\Big(\tr\Big(\Big[\,\ol{\ga_D
\big(H_{0,\Om}^N-\ol{z}I_{\Om}\big)^{-1}\ol{u}}\,\Big]^*
\ol{\ga_N\big(H_{0,\Om}^D-zI_{\Om}\big)^{-1}v}  \no 
\\
& \hspace*{2.6cm} \times
\ol{u\big(H_{0,\Om}^D-zI_{\Om}\big)^{-1}v}\Big[I_{\Om}+\ol{u\big(H_{0,\Om}^D
-zI_{\Om}\big)^{-1}v}\,\Big]^{-1}\Big)\Big). 
\end{align}
Then, utilizing Corollary \ref{c2.5} with $p_1$ and $p_2$ as in
\eqref{4.6}, one finds,
\begin{align}
\ol{\ga_D \big(H_{0,\Om}^N-\ol{z}I_{\Om}\big)^{-1}\ol{u}}
&\in\cB_{p_1}\big(\LOm,\LdOm\big),
\\
\ol{\ga_N\big(H_{0,\Om}^D-zI_{\Om}\big)^{-1}v}
&\in\cB_{p_2}\big(\LOm,\LdOm\big),
\end{align}
and hence,
\begin{align}
\Big[\,\ol{\ga_D \big(H_{0,\Om}^N-\ol{z}I_{\Om}\big)^{-1}\ol{u}}\,\Big]^*
\ol{\ga_N \big(H_{0,\Om}^D-zI_{\Om}\big)^{-1}v} &\in \cB_p\big(\LOm\big)
\subset\cB_2\big(\LOm\big),
\\
\ol{\ga_N \big(H_{0,\Om}^D-zI_{\Om}\big)^{-1}v} \Big[\,\ol{\ga_D
\big(H_{0,\Om}^N-\ol{z}I_{\Om}\big)^{-1}\ol{u}}\,\Big]^* &\in
\cB_p\big(\LdOm\big) \subset\cB_2\big(\LdOm\big).
\end{align}
Moreover, using the fact that
\begin{align}
\Big[I_{\Om}+\ol{u\big(H_{0,\Om}^D-zI_{\Om}\big)^{-1}v}\,\Big]^{-1} \in
\cB\big(\LOm\big), \quad z\in\bbC\big\backslash
\big(\si\big(H_{\Om}^D\big)\cup\si\big(H_{0,\Om}^D\big)\big),
\end{align}
one now applies the idea expressed in formula \eqref{4.0} and rearranges the terms in \eqref{4.14} as follows:
\begin{align} \lb{4.20}
&\frac{\det{}_2\Big(I_{\Om}+\ol{u\big(H_{0,\Om}^N-zI_{\Om}\big)^{-1}v}\,\Big)}
{\det{}_2\Big(I_{\Om}+\ol{u\big(H_{0,\Om}^D-zI_{\Om}\big)^{-1}v}\,\Big)} \no
\\
&\quad = \det{}_2\Big(I_{\dOm} - \ol{\ga_N
\big(H_{0,\Om}^D-zI_{\Om}\big)^{-1}v}
\Big[I_{\Om}+\ol{u\big(H_{0,\Om}^D-zI_{\Om}\big)^{-1}v}\,\Big]^{-1}
\Big[\, \ol{ \ga_D
\big(H_{0,\Om}^N-\ol{z}I_{\Om}\big)^{-1}\ol{u}}\,\Big]^*\Big)  \no
\\
&\quad\quad \times \exp\Big(\tr\Big(\,\ol{\ga_N
\big(H_{0,\Om}^D-zI_{\Om}\big)^{-1} v} \; \ol{u\big(H_{0,\Om}^D-zI_{\Om}\big)^{-1}
v}  \no
\\
& \hspace*{2.45cm} \times \Big[I_{\Om}+
\ol{u\big(H_{0,\Om}^D-zI_{\Om}\big)^{-1} v}\,\Big]^{-1} \Big[\,\ol{\ga_D
\big(H_{0,\Om}^N-\ol{z}I_{\Om}\big)^{-1}\ol{u}}\,\Big]^*\Big)\Big) \no
\\
&\quad = \det{}_2\Big(I_{\dOm} - \ol{\ga_N \big(H_{0,\Om}^D
-zI_{\Om}\big)^{-1}\wti v} \Big[I_{\Om}+\ol{\wti
u\big(H_{0,\Om}^D-zI_{\Om}\big)^{-1}\wti v}\,\Big]^{-1} \Big[\,\ol{\ga_D
\big(H_{0,\Om}^N-\ol{z}I_{\Om}\big)^{-1}\ol{\wti u}}\,\Big]^*\Big) \no
\\
&\quad\quad \times
\exp\Big(\tr\Big(\,\ol{\ga_N\big(H_{0,\Om}^D-zI_{\Om}\big)^{-1}\wti v} \;
\ol{\wti u\big(H_{0,\Om}^D-zI_{\Om}\big)^{-1}\wti v}
\\
& \hspace*{2.45cm} \times \Big[I_{\Om}+\ol{\wti
u\big(H_{0,\Om}^D-zI_{\Om}\big)^{-1}\wti v}\,\Big]^{-1} \Big[\,\ol{\ga_D
\big(H_{0,\Om}^N-\ol{z}I_{\Om}\big)^{-1}\ol{\wti u}}\,\Big]^*\Big)\Big).
\no
\end{align}
In the last equality we employed the following simple
identities,
\begin{align}
& V = u v = \wti u  \wti v,  \\
& v\Big[I_{\Om}+\ol{u\big(H_{0,\Om}^D-zI_{\Om}\big)^{-1}v}\,\Big]^{-1}u =
\wti v \Big[I+\ol{\wti u\big(H_{0,\Om}^D-zI_{\Om}\big)^{-1}\wti
v}\,\Big]^{-1} \wti u.
\end{align}
Utilizing \eqref{4.20} and the following resolvent identity,
\begin{align} \lb{4.23}
\ol{\big(H_{\Om}^D-zI_{\Om}\big)^{-1}\wti v} =
\ol{\big(H_{0,\Om}^D-zI_{\Om}\big)^{-1}\wti v} \Big[I_{\Om}+\ol{\wti
u\big(H_{0,\Om}^D-zI_{\Om}\big)^{-1}\wti v}\,\Big]^{-1},
\end{align}
one arrives at \eqref{4.3}, subject to the extra assumption $V\in
L^p(\Om;d^n x)\cap L^\infty(\Om;d^n x)$.

Finally, assuming only $V\in L^p(\Om;d^n x)$ and utilizing \cite[Thm.\ 3.2]{GLMZ05}, Lemma \ref{l2.3}, and Corollary \ref{c2.5} once again, one obtains
\begin{align}
\Big[I_{\Om}+\ol{\wti u\big(H_{0,\Om}^D-zI_{\Om}\big)^{-1} \wti
v}\,\Big]^{-1} &\in \cB\big(\LOm\big), \lb{4.24}
\\
\wti u\big(H_{0,\Om}^D-zI_{\Om}\big)^{-p/p_1} &\in
\cB_{p_1}\big(\LOm\big), \lb{4.25}
\\
\wti v\big(H_{0,\Om}^D-zI_{\Om}\big)^{-p/p_2} &\in
\cB_{p_2}\big(\LOm\big), \lb{4.26}
\\
\ol{\ga_D \big(H_{0,\Om}^N-zI_{\Om}\big)^{-1}\wti u} &\in
\cB_{p_1}\big(\LOm,\LdOm\big), \lb{4.27}
\\
\ol{\ga_N\big(H_{0,\Om}^D-zI_{\Om}\big)^{-1}\wti v} &\in
\cB_{p_2}\big(\LOm,\LdOm\big), \lb{4.28}
\end{align}
and hence
\begin{equation}
\ol{\wti u\big(H_{0,\Om}^D-zI_{\Om}\big)^{-1}\wti v} \in
\cB_p\big(\LOm\big)\subset\cB_2\big(\LOm\big).  \lb{4.29}
\end{equation}
Relations \eqref{4.23}--\eqref{4.29} prove \eqref{4.1} and
\eqref{4.2}, and hence, the left- and the right-hand sides of
\eqref{4.3} are well-defined for $V\in L^p(\Om;d^nx)$. Thus, using
\eqref{2.8}, \eqref{2.27}, \eqref{2.28}, the continuity of
$\det{}_2(\cdot)$ with respect to the Hilbert--Schmidt norm
$\|\cdot\|_{\cB_2\big(\LOm\big)}$, the continuity of $\tr(\cdot)$ with
respect to the trace norm $\|\cdot\|_{\cB_1\big(\LOm\big)}$, and an
approximation of $V\in L^p(\Om;d^nx)$ by a sequence of potentials
$V_k \in L^p(\Om;d^nx)\cap L^\infty(\Om;d^nx)$, $k\in\bbN$, in the
norm of $L^p(\Om;d^nx)$ as $k\uparrow\infty$, then extends the
result from $V\in L^p(\Om;d^nx)\cap L^\infty(\Om;d^nx)$ to $V\in
L^p(\Om;d^nx)$, $n=2,3$.
\end{proof}

Given these preparations, we are now ready for the principal result of this paper, the multi-dimensional analog of Theorem \ref{t1.2}:

\begin{theorem} \lb{t4.2}
Assume Hypothesis \ref{h2.6} and let
$z\in\bbC\big\backslash\big(\si\big(H_{\Om}^D\big)\cup \si\big(H_{0,\Om}^D\big) \cup
\si\big(H_{0,\Om}^N\big)\big)$. Then,
\begin{equation}
M_{\Om}^{D}(z)M_{0,\Om}^{D}(z)^{-1} - I_{\partial\Om} =
- \ol{\ga_N\big(H_{\Om}^D-zI_{\Om}\big)^{-1} V
\big[\ga_D(H_{0,\Om}^N-\ol{z}I_{\Om})^{-1}\big]^*}
\in \cB_2\big(L^2(\partial\Om; d^{n-1}\sigma)\big)
\end{equation}
and
\begin{align}
& \frac{\det{}_2\Big(I_{\Om}+\ol{u\big(H_{0,\Om}^N-zI_{\Om}\big)^{-1}v}\,\Big)}
{\det{}_2\Big(I_{\Om}+\ol{u\big(H_{0,\Om}^D-zI_{\Om}\big)^{-1}v}\,\Big)} \no \\
& \quad = \det{}_2\Big(I_{\dOm} - \ol{\ga_N\big(H_{\Om}^D-zI_{\Om}\big)^{-1} V
\big[\ga_D(H_{0,\Om}^N-\ol{z}I_{\Om})^{-1}\big]^*}\,\Big)
e^{\tr(T_2(z))}   \lb{4.30}  \\
& \quad =
\det{}_2\big(M_{\Om}^{D}(z)M_{0,\Om}^{D}(z)^{-1}\big)
e^{\tr(T_2(z))},   \lb{4.31}
\end{align}
where
\begin{equation}
T_2(z)=\ol{\ga_N\big(H_{0,\Om}^D-zI_{\Om}\big)^{-1} V
\big(H_{\Om}^D-zI_{\Om}\big)^{-1} V
\big[\ga_D \big(H_{0,\Om}^N-\ol{z}I_{\Om}\big)^{-1}\big]^*}
\in \cB_1\big(L^2(\partial\Om; d^{n-1}\sigma)\big).
\end{equation}
\end{theorem}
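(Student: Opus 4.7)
The proof will essentially be an assembly of results already established in Theorem \ref{t4.1} and Lemma \ref{l3.5}, combined with the standard multiplicativity property of $\det_{2}$. The plan is to first verify the claimed operator identity and trace-class memberships, then identify the two Fredholm factorizations.

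First, to establish the identity
\begin{equation*}
M_{\Om}^{D}(z)M_{0,\Om}^{D}(z)^{-1} - I_{\partial\Om} = - \ol{\ga_N\big(H_{\Om}^D-zI_{\Om}\big)^{-1} V \big[\ga_D(H_{0,\Om}^N-\ol{z}I_{\Om})^{-1}\big]^*},
\end{equation*}
I would invoke \eqref{3.36} in Lemma \ref{l3.5}, which gives exactly this identity but with the weak Neumann trace $\wti\ga_N$ in place of $\ga_N$ and with $((H_{0,\Om}^N - zI_\Om)^{-1})^*$ in place of $(H_{0,\Om}^N - \ol z I_\Om)^{-1}$. The second change is immediate from self-adjointness of $H_{0,\Om}^N$. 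The passage from $\wti\ga_N$ to $\ga_N$ is precisely the remark stated just after Lemma \ref{l3.5} (using the mapping properties \eqref{4.24}--\eqref{4.29} together with the fact that $\ol{(H_\Om^D-zI_\Om)^{-1}\wti v} \in \cB(\LOm, H^{3/2}(\Om))$, on which the operators $\ga_N$ and $\wti\ga_N$ agree). The Hilbert--Schmidt membership of this operator in $\cB_2\big(\LdOm\big)$ is precisely \eqref{4.2} of Theorem \ref{t4.1}, while the trace class membership of $T_2(z)$ in $\cB_1\big(\LdOm\big)$ is \eqref{4.1}.

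Second, the equality \eqref{4.30} is literally the assertion \eqref{4.3} of Theorem \ref{t4.1}, so there is nothing new to prove there. Finally, the equality \eqref{4.31} is immediate upon substituting the operator identity just verified into the first $\det_2$-factor of \eqref{4.30}: indeed, that identity says
\begin{equation*}
I_{\dOm} - \ol{\ga_N\big(H_{\Om}^D-zI_{\Om}\big)^{-1} V \big[\ga_D(H_{0,\Om}^N-\ol{z}I_{\Om})^{-1}\big]^*} = M_{\Om}^{D}(z)M_{0,\Om}^{D}(z)^{-1},
\end{equation*}
and since both sides differ from $I_{\partial\Om}$ by the same Hilbert--Schmidt operator, $\det_2$ of each side coincides. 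There is essentially no obstacle here beyond the bookkeeping: the hard work in bounding the operators in the various trace ideals is already discharged in Section \ref{s2}, the analytic continuation and approximation argument for $V\in L^p(\Omega;d^nx)$ (without assuming $L^\infty$ control) is already done in the proof of Theorem \ref{t4.1}, and the reduction of the bulk-side $\det_2$-ratio to a boundary object is done in Lemma \ref{l3.5}. The only conceptual point to emphasize is the ``matrix identity'' \eqref{4.0}, which is what enables the boundary determinant on the right-hand side of \eqref{4.31} to represent a quantity naturally defined on $\LOm$.
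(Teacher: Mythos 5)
Your proposal is correct and follows exactly the same route as the paper, which simply combines Lemma \ref{l3.5} (to express the Dirichlet-to-Neumann ratio in boundary-operator form) with Theorem \ref{t4.1} (for the trace-ideal memberships and the determinant identity \eqref{4.3}). The additional points you note — passing from $\wti\ga_N$ to $\ga_N$ via the mapping properties \eqref{4.24}--\eqref{4.29}, and replacing $\big((H_{0,\Om}^N-zI_\Om)^{-1}\big)^*$ by $(H_{0,\Om}^N-\ol z I_\Om)^{-1}$ via self-adjointness — are correct details that the paper relegates to the remark following Lemma \ref{l3.5}.
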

\begin{proof}
The result follows from combining Lemma \ref{l3.5} and Theorem \ref{t4.1}.
\end{proof}

A few comments are in order at this point.

The sudden appearance of the exponential term $\exp(\tr(T_2(z)))$ in \eqref{4.30} and \eqref{4.31}, when compared to the one-dimensional case, is due to the necessary use of the modified determinant ${\det}_p(\cdot)$ in Theorems \ref{t4.1} and \ref{t4.2}.

The multi-dimensional extension \eqref{4.30} of
\eqref{1.16}, under the stronger hypothesis $V\in L^2(\Om; d^n x)$, $n=2,3$, first appeared in \cite{GLMZ05}. However, the present results in Theorem \ref{t4.2} go decidedly beyond those in \cite{GLMZ05} in the sense that the class of domains 
$\Omega$ permitted by Hypothesis \ref{h2.1} is greatly enlarged as compared to \cite{GLMZ05} and the conditions on $V$ satisfying Hypothesis \ref{h2.6} are nearly optimal by comparison with the Sobolev inequality (cf.\ Cheney \cite{Ch84}, Reed and Simon \cite[Sect.\ IX.4]{RS75}, Simon \cite[Sect.\ I.1]{Si71}). Moreover, the multi-dimensional extension \eqref{4.31} of \eqref{1.17} invoking Dirichlet-to-Neumann maps is a new result.

The principal reduction in Theorem \ref{t4.2} reduces (a ratio of) modified Fredholm determinants associated with operators in $L^2(\Om; d^n x)$ on the left-hand side of \eqref{4.30} to modified Fredholm determinants associated with operators in $L^2(\partial\Om; d^{n-1} \sigma)$ on the right-hand side of \eqref{4.30} and especially, in \eqref{4.31}. This is the analog of the reduction described in the one-dimensional context of Theorem \ref{t1.2}, where $\Om$ corresponds to the half-line $(0,\infty)$ and its boundary $\partial\Om$ thus corresponds to the one-point set $\{0\}$.

In the context of elliptic operators on smooth $k$-dimensional manifolds, the idea of reducing a ratio of zeta-function regularized determinants to a calculation over the
$(k-1)$-dimensional boundary has been studied by Forman \cite{Fo87}. He also pointed out that if the manifold consists of an interval, the special  case of a pair of boundary points then permits one to reduce the zeta-function regularized determinant to the determinant of a finite-dimensional matrix. The latter case is of course an analog of the one-dimensional Jost and Pais formula mentioned in the introduction (cf.\ Theorems \ref{t1.1} and  \ref{t1.2}). Since then, this topic has been further developed in various directions and we refer, for instance, to Burghelea, Friedlander, and Kappeler \cite {BFK91}, \cite {BFK92}, \cite {BFK93}, \cite{BFK95}, Carron \cite{Ca02}, Friedlander \cite{Fr05}, M\"uller \cite{Mu98}, Park and Wojciechowski \cite{PW05}, and the references therein.

\begin{remark}  \lb{c4.3} The following observation yields a simple
application of formula \eqref{4.30}. Since by the Birman--Schwinger principle (cf., e.g., the discussion in \cite[Sect.\ 3]{GLMZ05}), for any
$z\in\bbC\big\backslash \big(\si\big(H_{\Om}^D\big)\cup
\si\big(H_{0,\Om}^D\big) \cup \si\big(H_{0,\Om}^N\big)\big)$, one has 
$z\in\si\big(H_{\Om}^N\big)$ if and only if $\det{}_2\Big(I_{\Om}+ \ol{u\big(H_{0,\Om}^N -zI_{\Om}\big)^{-1}v}\,\Big)=0$, it follows from \eqref{4.30} that
\begin{align}
\begin{split}
&\text{for all } \, z\in\bbC\big\backslash \big(\si\big(H_{\Om}^D\big)\cup
\si\big(H_{0,\Om}^D\big) \cup \si\big(H_{0,\Om}^N\big)\big), \text{ one has }
z\in\si\big(H_{\Om}^N\big)  \\
&\quad \text{if and only if } \, \det{}_2\Big( I_{\dOm} -
\ol{\ga_N\big(H_{\Om}^D-zI_{\Om}\big)^{-1}V
\big[\ga_D\big(H_{0,\Om}^N-\ol{z}I_{\Om}\big)^{-1}\big]^*}\, \Big)=0.
\end{split}
\end{align}
One can also prove the following analog of \eqref{4.30}:
\begin{align}
&\frac{\det{}_2\Big(I_{\Om}+\ol{u\big(H_{0,\Om}^D-zI_{\Om}\big)^{-1}v}\,\Big)}
{\det{}_2\Big(I_{\Om}+\ol{u\big(H_{0,\Om}^N-zI_{\Om}\big)^{-1}v}\,\Big)} \no \\
&\quad = \det{}_2\Big(I_{\dOm} +
\ol{\ga_N\big(H_{0,\Om}^D-zI_{\Om}\big)^{-1}V
\big[\ga_D\big((H_{\Om}^N-z I_{\Om}\big)^{-1})^*\big]^*}\,\Big)
\lb{4.37} \\
&\quad\quad \times \exp\Big(-\tr\Big(\, 
\ol{\ga_N\big(H_{0,\Om}^D-zI_{\Om}\big)^{-1}V
\big(H_{\Om}^N-zI_{\Om}\big)^{-1}V
\big[\ga_D\big(H_{0,\Om}^N-\ol{z}I_{\Om}\big)^{-1}\big]^*}
\,\Big)\Big). \no
\end{align}
Then, proceeding as before, one obtains
\begin{align}
& \text{for all } \, z \in \bbC \big\backslash
\big(\si\big(H_{\Om}^N\big)\cup\si\big(H_{0,\Om}^N\big)\cup\si\big(H_{0,\Om}^D\big)\big),
\text{ one has } z\in\si\big(H_{\Om}^D\big) \\
&\quad \text{if and only if }\, \det{}_2\Big(I_{\dOm} +
\ol{\ga_N\big(H_{0,\Om}^D-zI_{\Om}\big)^{-1}V
\big[\ga_D\big(\big(H_{\Om}^N-z I_{\Om}\big)^{-1}\big)^*\big]^*} \,\Big)=0.  \no
\end{align}
\end{remark}

\noindent {\bf Acknowledgments.}
We are indebted to Yuri Latushkin and Konstantin A. Makarov for numerous
discussions on this topic. We also thank the referee for a careful reading of 
our manuscript and his constructive comments. 

Fritz Gesztesy would like to thank all 
organizers of the international conference on Operator Theory and Mathematical Physics (OTAMP), and especially, Pavel Kurasov, for their kind invitation, the stimulating atmosphere during the meeting, and the hospitality extended to him during his stay in Lund in June of 2006. He also gratefully acknowledges a research leave for the academic year 2005/06 granted by the Research Council and the Office of Research of the University of Missouri--Columbia. 


\end{document}